
\documentclass[11pt]{amsart}

\usepackage{amssymb,amsfonts,latexsym}
\usepackage[centertags]{amsmath}
\usepackage{amsfonts}
\usepackage{amssymb}
\usepackage{amsthm}
\usepackage[all]{xy}
\usepackage{cite}
\usepackage{graphicx,color}
\usepackage{todo}
\xyoption{poly}
\setlength{\textheight}{8.5truein}
\setlength{\textwidth}{6.5truein}
\setlength{\evensidemargin}{0truein}
\setlength{\oddsidemargin}{0truein}
\setlength{\topmargin}{0truein}

\newtheorem{theorem}{Theorem}[section]
\newtheorem{lemma}[theorem]{Lemma}
\newtheorem{proposition}[theorem]{Proposition}
\newtheorem{corollary}[theorem]{Corollary}

\theoremstyle{definition}
\newtheorem{definition}[theorem]{Definition} 
\newtheorem{remark}[theorem]{Remark}
\newtheorem{problem}[theorem]{Problem}
\newtheorem{example}[theorem]{Example}

\newcommand{\Pic}{\operatorname{Pic}}

\newcommand{\cE}{{\mathcal E}}

\newcommand{\cF}{{\mathcal F}}

\newcommand{\cO}{{\mathcal O}}

\newcommand {\ZZ}{\mathbb{Z}}
\newcommand {\NN}{\mathbb{N}}
\newcommand {\PP}{\mathbb{P}}
\newcommand{\cS}{{\mathcal S}}
\newcommand{\cQ}{{\mathcal Q}}

\DeclareMathOperator{\GL}{ GL}
\DeclareMathOperator{\Gr}{Gr}
\newcommand{\odi}[1]{\mathcal{O}_{#1}}
\newcommand{\shE}{\mathcal{E}}
\DeclareMathOperator{\rk}{rk}
\DeclareMathOperator{\Hl}{H}
\DeclareMathOperator{\h}{h}
\DeclareMathOperator{\depth}{depth}
\DeclareMathOperator{\de}{deg}

\begin{document}
\title[$\GL(V)$-invariant Ulrich bundles on Grassmannians]
{$\mathbf{GL(V)}$-invariant Ulrich bundles on Grassmannians}

\author[L.\ Costa, R.M.\ Mir\'o-Roig]{L.\ Costa$^*$, R.M.\
Mir\'o-Roig$^{**}$}

\address{Facultat de Matem\`atiques,
Departament d'Algebra i Geometria, Gran Via de les Corts Catalanes
585, 08007 Barcelona, SPAIN } \email{costa@ub.edu}

\address{Facultat de Matem\`atiques,
Departament d'Algebra i Geometria, Gran Via de les Corts Catalanes
585, 08007 Barcelona, SPAIN } \email{miro@ub.edu}

\date{\today}
\thanks{$^*$ Partially supported by MTM2010-15256.}
\thanks{$^{**}$ Partially supported by MTM2010-15256.}

\subjclass{Primary 14F05; Secondary 14M25}


\begin{abstract}
In this paper, we give a full classification of all homogeneous Ulrich bundles on a Grassmannian $\Gr(k,n)$ of
$k$-planes on $\PP^n$.
\end{abstract}


\maketitle

\tableofcontents

\section{Introduction}

 The existence of Ulrich bundles (i.e. bundles without intermediate cohomology  whose corresponding module has the maximal number of generators) on a projective variety is a challenging problem with a long and interesting history behind and few known examples. For example, Brennan, Herzog,  and Ulrich   proved the existence of Ulrich bundles on a complete intersection
 and on a linear determinantal variety (\cite{BHU}, Proposition 2.8); Buchsweitz, Eisenbud and Herzog  showed that the minimal rank of
 an Ulrich  bundle on a hyperquadric $Q\subset \PP^n$ is $2^{\lfloor \frac{n}{2} \rfloor -1}$;  and the $d$-uple
 embedding of $\PP^n$ has an Ulrich bundle if $n\le 2$ or $n=3$ and $3\nmid d$ (\cite{H}, Theorem 3.7  and \cite{ESW},
 Proposition 5.10).  See also  \cite{CH}, \cite{CMRP}, \cite{MR}, \cite{MRP} and  \cite{MRP1} and the references in those papers for
 other examples of Ulrich bundles.

 The notion of Ulrich bundle has its origins on its algebraic counterpart. In the eighties, Ulrich in  \cite{Ulr} (see also \cite{BHU}) gave an upper bound for the minimal number of generators of a Maximal Cohen-Macaulay module over a Cohen-Macaulay homogeneous ring.  Modules attaining the bound are called  Ulrich modules
 and correspondingly Ulrich bundles in the geometric set up.  A detailed account on Ulrich bundles is provided in
 \cite{ESW}. In particular, we know that Ulrich bundles on a projective variety $X\subset \PP^n$ admit a linear
 locally free $\odi{\PP^n}$-resolution (\cite{ESW}, Proposition 2.1) and for that reason they have been
 also studied under the name linear maximal Cohen-Macaulay modules (see, for instance, \cite{H}).

 \vspace{3mm}

 In \cite{ESW}, pg. 543,  Eisenbud, Schreyer and Weyman leave open  the following  problem

\begin{problem} \label{problem} (a) Is every  variety (or even scheme) $X\subset \PP^n$ the support of an Ulrich
sheaf?

(b) If so, what is the smallest possible rank  for such a sheaf?
\end{problem}

Recently, after the Boij-S\"{o}derberg theory has been developed, the interest on these questions has grown up due to the fact that it has been proved (\cite{ES2}, Theorem 4.2) that the existence of an Ulrich sheaf on a smooth projective variety $X$ of dimension $n$ implies that the cone of cohomology tables of vector bundles on $X$ is the same as the one of vector bundles on $\PP^n$.

\vspace{3mm}

The goal of this work is to solve both Problems  for the Grassmann variety $\Gr(k,n)$
of $k$-dimensional  linear subspaces of $\PP^n$. Even more, we will explicitly determine all $\GL(V)$-irreducible invariant
Ulrich bundles on $\Gr(k,n)$.  As a main tool we will use Schur
functors and the Borel-Bott-Weil theorem which computes  the cohomology of any
irreducible $\GL(V)$-invariant vector bundle $\Sigma^{\alpha}\cQ \otimes \Sigma^{\gamma}\cS^{\vee}$ on $\Gr(k,n)$.
This fact  is what limits  our result to characteristic zero.

\vskip 4mm
Next we outline the structure. The paper is organized in 3 sections: in section 2, we fix the notation, we introduce
the definition  and basic properties of  Ulrich bundles  and the results on Grassmannians needed  throughout
this paper.  Section 3 contains the main results of this
paper: a full classification of $\GL(V)$-invariant initialized Ulrich bundles on $\Gr(k,n)$ (Theorem
\ref{main}). As a consequence, we establish the minimum rank of any $\GL(V)$-invariant Ulrich bundle on $\Gr(k,n)$ and we determine the slope  of any Ulrich bundle on $\Gr(k,n)$.

\vspace{3mm}

\noindent {\em Acknowledgment:} We deeply thank Frank-Olaf Schreyer for many insights that lead us to the main result of the paper.


\section{Preliminaries}

In this section we are going to introduce the definitions  and main properties of Grassmannians as well as those of Ulrich sheaves that will be used throughout the rest of the paper.

Let us start fixing some notation. We will work over an algebraically closed field $K$ of characteristic zero. Given a
non-singular variety $X$ equipped with an ample line bundle $\odi{X}(1)$, the line bundle $\odi{X}(1)^{\otimes l}$ will
be denoted by $\odi{X}(l)$. For any coherent sheaf $\shE$ on $X$ we are going to denote the twisted sheaf
$\shE\otimes\odi{X}(l)$ by $\shE(l)$ and the slope of $\shE$ by $\mu(\shE):= \deg(c_1(\shE))/\rk(\shE)$. As usual,
$\Hl^i(X,\shE)$ stands for the cohomology groups, $\h^i(X,\shE)$ for their dimension and $\Hl^i_*(X,\shE)=\oplus _{l
\in \ZZ}\Hl^i(X,\shE(l))$.

\vspace{4mm}

\begin{definition}\rm
Let $(X,\odi{X}(1))$ be a polarized variety. A coherent sheaf $\shE$ on $X$ is \emph{Arithmetically Cohen Macaulay}
(ACM for short) if it is locally Cohen-Macaulay (i.e., $\depth \shE_x=dim \odi{X,x}$ for every point $x\in X$) and has
no intermediate cohomology:
$$
\Hl^i_*(X,\shE)=0 \quad\quad \text{    for all $i=1, \ldots , dim X-1.$}
$$
\end{definition}
Notice that when $X$ is a non-singular variety, which is going to be our case, any coherent ACM sheaf on $X$ is locally
free. For this reason we are going to speak uniquely of ACM bundles.

\begin{definition}\rm
Given a polarized variety $(X,\odi{X}(1))$, a coherent sheaf $\shE$ on  $X$
  is \emph{initialized} if
$$
\Hl^0(X,\shE(-1))=0 \ \ \text{ but } \  \Hl^0(X,\shE)\neq 0.
$$
Notice that when $\shE$ is a locally Cohen-Macaulay sheaf, there always exists an integer $t_0$ such that
$\shE_{init}:=\shE(t_0)$ is
initialized.
\end{definition}

The ACM bundles that we are interested in share a stronger property, namely they have the maximal possible number of
global sections, i.e. they realize the upper bound  given by the following result:

\vspace{3mm}

\begin{theorem}
Let $X\subseteq\PP^n$ be an integral subscheme and let $\shE$  be an initialized ACM sheaf on $X$. Denote by $m(\shE)$
the minimal number of generators  of the $R_X$-module $\Hl^0_*(\shE)$. Then,
$$\h^0(\shE)\leq m(\shE)\leq \deg (X)\rk(\shE).
$$
\end{theorem}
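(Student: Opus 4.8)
The plan is to prove the two inequalities separately. Throughout write $M:=\Hl^0_*(\shE)$, a finitely generated graded module over $R_X$ with $\shE=\widetilde{M}$ (recall $X$ is non-singular, so $\shE$ is locally free), and let $\mathfrak m\subset R_X$ denote the irrelevant maximal ideal.

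\emph{The inequality $\h^0(\shE)\le m(\shE)$.} Since $\shE$ is initialized, $\Hl^0(X,\shE(-1))=0$, i.e. $M_{-1}=0$, so the degree-zero component of $\mathfrak mM$ vanishes: $(\mathfrak mM)_0=(R_X)_1\cdot M_{-1}=0$. Hence $(M/\mathfrak mM)_0$ coincides with $M_0=\Hl^0(X,\shE)$, and by the graded Nakayama Lemma the minimal generating set of $M$ must contain $\dim_K(M/\mathfrak mM)_0=\h^0(\shE)$ elements of degree $0$. Therefore $m(\shE)\ge\h^0(\shE)$.

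\emph{The inequality $m(\shE)\le\deg(X)\rk(\shE)$.} The key point is that the ACM hypothesis forces $M$ to be a maximal Cohen--Macaulay $R_X$-module, i.e. $\depth_{\mathfrak m}M=\dim R_X=\dim X+1$: indeed $M$ is torsion-free and saturated (so $\Hl^0_{\mathfrak m}(M)=\Hl^1_{\mathfrak m}(M)=0$), while the graded isomorphisms $\Hl^i_{\mathfrak m}(M)\cong\bigoplus_{l\in\ZZ}\Hl^{i-1}(X,\shE(l))$ for $2\le i\le\dim X$ vanish by the no--intermediate--cohomology condition. Granting this, since $K$ is infinite we may pick $\dim X+1$ general linear forms $\ell_0,\dots,\ell_{\dim X}\in(R_X)_1$; they form a homogeneous system of parameters for $M$, hence ($M$ being maximal Cohen--Macaulay) a regular sequence on $M$, and they generate a minimal reduction of $\mathfrak m$. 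Setting $\overline M:=M/(\ell_0,\dots,\ell_{\dim X})M$, a graded Artinian module, the graded Nakayama Lemma gives $m(\shE)=\mu_{R_X}(M)=\mu(\overline M)\le\dim_K\overline M$. Because $(\ell_0,\dots,\ell_{\dim X})$ is an $M$-regular system of parameters which is a reduction of $\mathfrak m$, one has $\dim_K\overline M=e\big((\ell_0,\dots,\ell_{\dim X});M\big)=e(\mathfrak m;M)$; and since $X$ is integral, $M$ has constant rank $\rk(\shE)$, so by additivity of multiplicity $e(\mathfrak m;M)=\rk(\shE)\,e(\mathfrak m;R_X)=\rk(\shE)\deg(X)$, which is the desired bound. (Equivalently: a general linear Noether normalization $A=K[y_0,\dots,y_{\dim X}]\subset R_X$ makes $M$ a finite $A$-module that is maximal Cohen--Macaulay over the regular ring $A$, hence $A$-free, so $m(\shE)=\mu_{R_X}(M)\le\rank_AM$; comparing leading coefficients of Hilbert polynomials --- $\dim_KM_t=\chi(\shE(t))$ for $t\gg0$, with leading term $\rk(\shE)\deg(X)\,t^{\dim X}/(\dim X)!$, against $\rank_A(M)\,t^{\dim X}/(\dim X)!$ --- yields $\rank_AM=\rk(\shE)\deg(X)$.)

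\emph{Main obstacle.} The heart of the argument is the middle step: translating the cohomological ACM condition on $\shE$ into the algebraic statement that $M=\Hl^0_*(\shE)$ is a maximal Cohen--Macaulay $R_X$-module, via the standard comparison between the local cohomology modules $\Hl^i_{\mathfrak m}(M)$ and the twisted sheaf cohomology of $\shE$ (torsion-freeness and saturation at the bottom, vanishing of $\Hl^i_{\mathfrak m}(M)$ for $2\le i\le\dim X$ from the intermediate-cohomology vanishing). Once this is secured, the remainder is routine commutative algebra: the graded Nakayama Lemma, stability of the minimal number of generators under Artinian reduction, invariance of multiplicity under minimal reductions, and additivity of multiplicity over the integral ring $R_X$ on modules of constant rank.
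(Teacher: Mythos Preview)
Your argument is correct and essentially the standard one. Note, however, that the paper does not actually prove this statement: its entire proof is the citation ``See \cite{CH}; Theorem 3.1,'' so there is no in-paper argument to compare against. What you have written---the lower bound via graded Nakayama using $M_{-1}=0$, and the upper bound by identifying $M=\Hl^0_*(\shE)$ as a maximal Cohen--Macaulay $R_X$-module and reducing modulo a general linear system of parameters to compute the multiplicity---is precisely the classical commutative-algebra proof underlying the cited result.

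One minor slip: your parenthetical ``recall $X$ is non-singular, so $\shE$ is locally free'' is not justified, since the theorem only assumes $X$ is integral. Fortunately nothing in your proof actually requires local freeness: the ACM hypothesis already includes that $\shE$ is locally Cohen--Macaulay, which suffices to make $M$ torsion-free and saturated (hence $\Hl^0_{\mathfrak m}(M)=\Hl^1_{\mathfrak m}(M)=0$), and integrality of $X$ guarantees that $\rk(\shE)$ is well-defined and that the additivity-of-multiplicity step goes through. Simply delete or correct the parenthetical.
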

\begin{proof} See \cite{CH}; Theorem 3.1.
\end{proof}

\vspace{3mm}

\begin{definition} \rm Given a projective scheme $X\subseteq \PP^n$ and a coherent sheaf $\shE$ on $X$, we say that
$\shE$ is an \emph{Ulrich sheaf} if  $\shE$ is an ACM sheaf and $\h^0(\shE_{init})=\deg(X)\rk(\shE)$, i.e. the minimal
number of generators of Ulrich sheaves is as large as possible.
\end{definition}

\vspace{3mm}

 Modules attaining this upper bound were studied by Ulrich in \cite{Ulr}. It is of particular interest for us the following fact:

\begin{proposition} \label{ulrichslope}
Let $X\subseteq\PP^n$ be a nonsingular $d$-dimensional ACM variety and let $\cE$ be an initialized sheaf on $X$. Then, the following conditions are equivalent:
\begin{enumerate}
\item[(i)] $\shE$ is Ulrich.
\item[(ii)] $\Hl^i(\shE(-i))=0$ for $i>0$ and $\Hl^i(\shE(-i-1))=0$ for $i<d$.
\item[(iii)] For some (resp. all) finite linear projection $\pi:X \rightarrow\PP^d$, the sheaf $\pi_*\shE$ is the trivial
    sheaf $\odi{\PP^d}^t$ for some $t$.
\end{enumerate}

In particular, all initialized Ulrich bundles on $X$ have the same slope.

\end{proposition}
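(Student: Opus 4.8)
The plan is to establish the chain of equivalences (i) $\Leftrightarrow$ (iii) $\Leftrightarrow$ (ii) and then read off the statement about slopes as an easy consequence of (iii). The most natural starting point is the equivalence (i) $\Leftrightarrow$ (iii), since it is the conceptual heart of the matter and the rest follows by cohomological bookkeeping. Fix a finite linear projection $\pi: X \to \PP^d$; since $X$ is ACM of degree $e := \deg(X)$, $\pi$ is finite and flat of degree $e$, so $\pi_*\odi X$ is a locally free $\odi{\PP^d}$-module of rank $e$, and more generally $\pi_*\shE$ is locally free of rank $e\cdot\rk(\shE)$. The key fact I would use is that $\pi$ being finite and affine implies $\Hl^i(X,\shE(t)) = \Hl^i(\PP^d, (\pi_*\shE)(t))$ for all $i,t$ (cohomology is insensitive to affine morphisms, and $\pi^*\odi{\PP^d}(1) = \odi X(1)$ by linearity of the projection). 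Thus $\shE$ has no intermediate cohomology on $X$ if and only if $\pi_*\shE$ has none on $\PP^d$, which on $\PP^d$ forces $\pi_*\shE$ to be a direct sum of line bundles by Horrocks' criterion. Then $\pi_*\shE = \bigoplus \odi{\PP^d}(a_j)$, and the initialization hypothesis together with the $h^0$-count $h^0(\shE) = h^0(\PP^d,\pi_*\shE)$ pins down all $a_j = 0$ precisely when $h^0(\shE_{init}) = e\cdot\rk(\shE)$; conversely if all $a_j=0$ then $h^0(\shE) = e\cdot\rk(\shE)$ and $h^0(\shE(-1)) = 0$, giving the Ulrich property. This handles both the "some" and "all" versions simultaneously, since the splitting type is forced either way.

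For (i)/(iii) $\Leftrightarrow$ (ii), I would argue directly with the pushforward. If $\pi_*\shE \cong \odi{\PP^d}^{\,t}$, then $\Hl^i(\shE(m)) = \Hl^i(\PP^d,\odi{\PP^d}(m))^{\,t}$, which vanishes for $0 < i < d$ and all $m$, vanishes for $i = d$ exactly when $m \geq -d$, and vanishes for $i = 0$ exactly when $m \leq -1$. In particular $\Hl^i(\shE(-i)) = 0$ for $i>0$ (as $-i \geq -d$ rules out the top degree, and $1 \leq i \leq d-1$ rules out the middle) and $\Hl^i(\shE(-i-1)) = 0$ for $i < d$ (for $i=0$ because $-1 \leq -1$, for $1 \le i \le d-1$ by the middle vanishing). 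Conversely, assuming (ii) and initialization, one shows by the same translation to $\PP^d$ and an inductive/Serre-duality argument (or directly via Horrocks together with the vanishing constraints, which kill all twists $a_j \neq 0$ in the splitting) that $\pi_*\shE$ must be trivial. Here I would be careful to use that (ii) already contains enough vanishing to imply ACM — the middle cohomology $\Hl^i_*(\shE) = 0$ for $0<i<d$ is not literally among the hypotheses of (ii) as twists range freely, so one needs the argument that the stated vanishings at the specific twists $-i$ and $-i-1$, combined with the splitting of $\pi_*\shE$ into line bundles (which itself requires knowing ACM), close the loop; the cleanest route is (ii) $\Rightarrow$ (iii) via first deducing ACM from the full strength, then Horrocks, then excluding nonzero twists.

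Finally, the slope assertion: if $\shE$ is initialized Ulrich of rank $r$, then $\pi_*\shE \cong \odi{\PP^d}^{\,er}$, and comparing first Chern classes under the finite flat map $\pi$ gives $\pi_*(c_1(\shE)) + \frac{r(e-1)}{2}\cdot(\text{class of the branch-type correction}) $ — more cleanly, one uses that for a finite flat $\pi$ of degree $e$ and a bundle $\shF$ one has a Grothendieck–Riemann–Roch or direct computation expressing $\mu(\shE)$ in terms of $\mu(\pi_*\shE) = 0$ and fixed data of $\pi$ (namely $c_1(\pi_*\odi X)$, i.e. $\deg X$ and the ambient geometry) that does not depend on $\shE$. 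Since these are universal, $\mu(\shE)$ is the same for every initialized Ulrich $\shE$. I expect the genuinely delicate point to be the careful handling of the "ACM is part of what must be proved" subtlety in the (ii) $\Rightarrow$ (iii) direction; everything else is standard pushforward-plus-Horrocks bookkeeping, and the reference \cite{CH}, Theorem 3.1 can be invoked for the numerical bound underlying the $h^0$-count.
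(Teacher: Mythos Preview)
Your proof plan is essentially correct and fleshes out what the paper simply defers to \cite{ESW}: the paper's proof consists of two citations, namely \cite{ESW}, Proposition 2.1 for the equivalence (i)$\Leftrightarrow$(ii)$\Leftrightarrow$(iii) and \cite{ESW}, Corollary 2.2 for the slope statement. Your pushforward-plus-Horrocks argument is exactly the content of the ESW proof, so in substance you are reproducing what lies behind the citation. The delicacy you flag in (ii)$\Rightarrow$(iii) is real but resolvable: the vanishings $\Hl^i(\shE(-i))=0$ for $i>0$ say $\pi_*\shE$ is $0$-regular, while the vanishings $\Hl^i(\shE(-i-1))=0$ for $i<d$ translate via Serre duality into $0$-regularity of the dual; together these force $\pi_*\shE$ to be trivial without first establishing ACM separately.

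The one place where your route genuinely differs from the paper's is the slope assertion. You propose a Grothendieck--Riemann--Roch comparison of $c_1$ under $\pi_*$, which is workable but somewhat heavy. The paper's argument is lighter: by (iii) one has $\pi_*\shE\cong\odi{\PP^d}^t$, so $\chi(\shE(t))=\rk(\shE)\deg(X)\binom{t+d}{d}$ is completely determined by $\rk(\shE)$; since by Hirzebruch--Riemann--Roch the slope $\mu(\shE)$ is read off from a specific coefficient of the polynomial $\chi(\shE(t))/\rk(\shE)$, and that polynomial is the same for every initialized Ulrich bundle, all such bundles share the same slope. This avoids any explicit GRR computation along $\pi$.
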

\begin{proof}

For the equivalence between (i), (ii) and (iii) see \cite{ESW}, Proposition 2.1.

 The last assertion follows from \cite{ESW}, Corollary 2.2 and the fact that the slope of any vector bundle $\cE$ on
 $X$ is encoded in the coefficient of $t^{d}$ in $\chi(\cE(t))$.
\end{proof}

\begin{remark}
\label{pu}
Let $X\subseteq\PP^n$ be a nonsingular $d$-dimensional ACM variety and let $\cE$ an initialized sheaf on $X$. It
follows from the above Proposition that $\cE$ is an Ulrich sheaf if and only if
\begin{equation} \label{condicioUlrich}
\Hl^i(\cE(t))=0 \quad \mbox{for any $i\geq 0$ and $-d \leq t \leq -1$.  }
\end{equation}
\end{remark}

\vspace{3mm}

The search of Ulrich sheaves on a particular variety is a challenging problem. In fact, few examples of varieties
supporting Ulrich sheaves are known, although in \cite{ESW} has been conjectured that any variety $X$ supports an
Ulrich sheaf. Moreover in \cite{ESW} they also ask what is the smallest possible rank of an Ulrich bundle on $X$.  In
this paper, we are going to focus our attention on the existence of $\GL(V)$-invariant Ulrich  bundles on Grassmannians
$\Gr(k,n)$. In \cite{ESW}, Corollary 5.7 the authors prove the existence of a unique $\GL(V)$-invariant Ulrich bundle
on $\PP^n=\PP(V)$ for the $d$-tuple embedding; it has  rank $d^{{n \choose 2}}$.  In this paper,  we give  the full
list of   $\GL(V)$-invariant Ulrich  bundles
on $\Gr(k,n)$ and, as a by product,  the smallest possible rank of a $\GL(V)$-invariant Ulrich
 bundle on $\Gr(k,n)$ is explicitly given in Corollary \ref{minimrankUlrich}.

\vskip 3mm

Let us recall now the basic facts on $\Gr(k,n)$ needed in this paper. Let  $\PP^n=\PP(V)$ be the $n$-dimensional projective space associated to the   $(n+1)$-dimensional vector space $V$ and as usual we denote by $\Gr(k,n)$ ($G$ for short) the Grassmann variety of
  $k$-linear subspaces of $\PP^n$ together with the Pl\"{u}cker embedding
\[ \Gr(k,n) \longrightarrow \PP(\bigwedge^{k+1}V)\cong \PP^{{n+1\choose k+1}-1}.\]

\noindent Since $\Gr(k,n) \cong \Gr(n-k-1,n)$ we can assume that $k \leq \frac{n-1}{2}$.
Recall that $\Gr(k,n)$ is an Arithmetically Cohen-Macaulay variety, $\dim(\Gr(k,n))=(k+1)(n-k)$ and
\begin{equation}
 \de(\Gr(k,n))= \frac{((k+1)(n-k))! k!(k-1)! \cdots 2!}{n!(n-1)! \cdots (n-k)!}
\end{equation}
(see for instance \cite{Mukai}, Proposition 1.10). On $\Gr(k,n)$ there is the universal exact sequence
\[ 0 \rightarrow  \cS^{\vee}\rightarrow V \otimes \cO_G \rightarrow \cQ \rightarrow 0 \]
defining the universal vector bundles $\cS$ and $\cQ$ of ranks $n-k$ and $k+1$, respectively. Note that
$\bigwedge^{k+1}\cQ \cong \bigwedge^{n-k}\cS$ is the positive generator $\cO_G(1)$ of $\Pic(\Gr(k,n))$. The tangent
bundle is
$\cQ \otimes \cS$ and hence the canonical line bundle is $\cO_G(-n-1)$.

\vspace{3mm}
Given any rank $r$ vector bundle $\cE$ on $\Gr(k,n)$ (resp. any $r$-dimensional vector space $W$) we will denote by
$\Sigma^{\lambda}\cE$ (resp. $\Sigma^{\lambda}W$ ) the result of applying the Schur functor $\Sigma^{\lambda}$ with
 $\lambda=(\lambda_1, \cdots, \lambda_r) \in \ZZ^{r}$  a partition with $r$ parts, i.e., $\lambda_1 \geq \lambda_2 \geq
 \cdots \geq \lambda_r \geq 0 $.  We choose the conventions such that
\[ \Sigma^{(\lambda_1,0,\cdots,0)}\cE = S^{\lambda_1}\cE  \quad \quad (\mbox{resp. } \Sigma^{(\lambda_1,0,\cdots,0)}W =
S^{\lambda_1}W)\]
is the $\lambda_1$-symmetric power of $\cE$ (resp. $W$) and
\[ \Sigma^{(\tiny{\overbrace{1,\cdots,1}^{m}},0,\cdots,0)}\cE = \bigwedge^m\cE  \quad \quad (\mbox{resp. }
\Sigma^{(\tiny{\overbrace{1,\cdots,1}^{m}},0,\cdots,0)}W = \bigwedge^mW)\]
is the $m$th-exterior power of $\cE$ (resp. $W$).  We also use the convention $\Sigma^{(\lambda_1, \cdots,
\lambda_s)}\cE=\Sigma^{(\lambda_1, \cdots, \lambda_s, 0, \cdots,0)}\cE$ whenever $s<r$. For any $t \in \ZZ$, we have
$\Sigma^{\lambda}\cQ \otimes \cO_G(t)=\Sigma^{\lambda+t}\cQ$ where $\lambda+t:=\lambda+(t, \cdots,
t)=(\lambda_1+t,\cdots,\lambda_{k+1}+t)$. We will draw the Young diagram corresponding to any partition $\lambda$ by
putting $\lambda_i$ boxes in the $i$-th row and left justifying the picture. We will denote also by $\lambda$ the
corresponding Young diagram and by $\lambda^{t}$ its transpose. For example the partition $(8,5,3,2,2,0,0)$ corresponds
to
\vspace{5mm}
\smallbreak
\centerline{
\vbox{
\def\star{\rlap{\hbox to 13pt{\hfil\raise3.5pt\hbox{$*$}\hfil}}}
\def\ {\hbox to 13pt{\vbox to 13pt{}\hfil}}
\def\*{\star\ }
\def\_{\hbox to 13pt{\hskip-.2pt\vrule\hss\vbox to 13pt{\vskip-.2pt
            \hrule width 13.4pt\vfill\hrule\vskip-.2pt}\hss\vrule\hskip-.2pt}}
\def\x{\star\_}
\offinterlineskip
\hbox{\ \ \    \_\_\_\_\_\_\_\_\ \ \ }
\hbox{\ \ \    \_\_\_\_\_\ \ \ \ \ \ }
\hbox{\ \ \    \_\_\_\ \ \ \ \ \ \ \ }
\hbox{\ \ \     \_\_\ \ \ \ \ \ \ \ \ }
\hbox{\ \ \     \_\_\ \ \ \ \ \ \ \ \ }
\hbox{\ \ \   \  \  \  \  \  \  \  \  \ \ \ }
}
}

\vspace{5mm}
\begin{remark}
For any integer $l \in \ZZ$, we have $$\Sigma^{(b_1,\cdots,b_{k+1})}Q \otimes \Sigma^{(a_1,\cdots,a_{n-k})}S^{\vee}
\cong \Sigma^{(b_1+l,\cdots,b_{k+1}+l)}Q \otimes \Sigma^{(a_1+l,\cdots,a_{n-k}+l)}S^{\vee}.$$ Thus, from now on while
dealing with $\GL(V)$-invariant bundles $\Sigma^{(b_1,\cdots,b_{k+1})}Q \otimes \Sigma^{(a_1,\cdots,a_{n-k})}S^{\vee}$
on $\Gr(k,n)$ we will  assume that $a_{n-k}=0$.
\end{remark}

\vspace{5mm}

For a later use, we collect here the following  well know result

\begin{lemma}
\label{dim}
Let $\cE$ be a rank $r$ vector bundle on $\Gr(k,n)$ (resp. let $W$ be any $r$-dimensional vector space) and
$\lambda=(\lambda_1, \cdots, \lambda_r) \in \ZZ^{r}$ a partition with $r$ parts. Then
\[\rk(\Sigma^{\lambda}\cE)=\dim_K(\Sigma^{\lambda}W)= \prod_{(i,j) \in \cF_{\lambda}}\frac{(r+j-i)}{d_{ij}} \]
where $\cF_{\lambda}= \{(i,j) | (i,j)\mbox{ is a position in the Young diagram } \lambda  \}$ and
$$d_{ij}=\lambda_i+(\lambda^{t})_j-(i+j)+1.$$
\end{lemma}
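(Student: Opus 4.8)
The plan is to reduce the assertion to a statement about a single vector space and then appeal to the classical hook-content formula. The rank of a Schur bundle is computed fibrewise: for any point $x\in\Gr(k,n)$ there is a canonical identification $(\Sigma^{\lambda}\cE)_x\cong\Sigma^{\lambda}(\cE_x)$, and $\cE_x$ is a $K$-vector space of dimension $r$, so $\rk(\Sigma^{\lambda}\cE)=\dim_K\Sigma^{\lambda}W$ for $W=\cE_x$. It therefore suffices to prove the second equality for an arbitrary $r$-dimensional vector space $W$. By the normalization of the Schur functors fixed above (namely $\Sigma^{(m,0,\dots,0)}W=S^{m}W$ and $\Sigma^{(1^{m},0,\dots)}W=\bigwedge^{m}W$), the module $\Sigma^{\lambda}W$ is the irreducible polynomial $\GL(W)$-representation of highest weight $\lambda$; hence its character is the Schur polynomial $s_{\lambda}$ and
\[
\dim_K\Sigma^{\lambda}W=s_{\lambda}(\underbrace{1,\dots,1}_{r}),
\]
the number of semistandard Young tableaux of shape $\lambda$ with entries in $\{1,\dots,r\}$.

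Now the desired identity is exactly the Frame--Robinson--Thrall hook-content theorem: for a box $(i,j)\in\cF_{\lambda}$ the integer $j-i$ is its content and $d_{ij}=\lambda_i+(\lambda^{t})_j-i-j+1$ is its hook length (arm $\lambda_i-j$, leg $(\lambda^{t})_j-i$, plus the box itself), so
\[
s_{\lambda}(\underbrace{1,\dots,1}_{r})=\prod_{(i,j)\in\cF_{\lambda}}\frac{r+j-i}{d_{ij}}.
\]
At this point one may simply quote this well-known formula from a standard reference (for instance Macdonald, \emph{Symmetric functions and Hall polynomials}, I.3, Example~4; Stanley, \emph{Enumerative Combinatorics}~II, Corollary~7.21.4; or Fulton, \emph{Young Tableaux}).

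If one prefers a self-contained derivation, I would obtain it from Weyl's dimension formula
\[
\dim_K\Sigma^{\lambda}W=\prod_{1\le i<j\le r}\frac{\lambda_i-\lambda_j+j-i}{j-i}
\]
by the usual manipulation with the shifted parts $\ell_i:=\lambda_i+r-i$. Writing $\lambda_i-\lambda_j+j-i=\ell_i-\ell_j$ and $\prod_{i<j}(j-i)=\prod_{i=1}^{r}(r-i)!$, Weyl's product becomes $\bigl(\prod_{i<j}(\ell_i-\ell_j)\bigr)/\prod_i(r-i)!$. On the other hand, since $\ell_i-\lambda_i=r-i$ one has $\prod_{(i,j)\in\cF_{\lambda}}(r+j-i)=\prod_{i=1}^{r}\ell_i!/(r-i)!$, and, using the elementary fact that for each fixed row $i$ the hook lengths $\{d_{ij}\ :\ 1\le j\le\lambda_i\}$ together with the differences $\{\ell_i-\ell_j\ :\ i<j\le r\}$ are precisely the integers $1,\dots,\ell_i$, one gets
\[
\prod_{(i,j)\in\cF_{\lambda}}d_{ij}=\frac{\prod_{i=1}^{r}\ell_i!}{\prod_{1\le i<j\le r}(\ell_i-\ell_j)}.
\]
Dividing, the factors $\prod_i\ell_i!$ cancel and the right-hand side of the claimed formula collapses to exactly Weyl's product.

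The genuine content is thus concentrated in the elementary row-by-row statement about hook lengths used in the last step; everything else is either the fibrewise reduction or bookkeeping. I expect that combinatorial step to be the only (minor) obstacle in a self-contained proof, and since the lemma is stated as ``well known'' the economical choice is to carry out the fibrewise reduction and then cite the hook-content formula directly.
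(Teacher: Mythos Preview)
Your argument is correct. The fibrewise reduction $(\Sigma^{\lambda}\cE)_x\cong\Sigma^{\lambda}(\cE_x)$ is immediate from the construction of Schur functors, and the identification of $\dim_K\Sigma^{\lambda}W$ with the hook-content product is precisely the Frame--Robinson--Thrall formula (equivalently, the specialization $s_{\lambda}(1,\dots,1)$ together with Stanley's hook-content theorem). The optional derivation you sketch from Weyl's dimension formula via the shifted parts $\ell_i=\lambda_i+r-i$ is also the standard one, and the ``elementary row-by-row statement'' you isolate---that for fixed $i$ the hook lengths in row $i$ together with the differences $\ell_i-\ell_j$ for $j>i$ exhaust $\{1,\dots,\ell_i\}$---is correct and easy to verify.

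As for the comparison: the paper gives no proof at all. The lemma is introduced with the phrase ``we collect here the following well know result'' and is left unproved; it is used later only to compute the rank of the particular Ulrich bundle $\cE^{(k+1)}_{(n-k)}$ in Corollary~\ref{minimrankUlrich}. So there is nothing in the paper to compare against, and your proposal---do the fibrewise reduction and cite the hook-content formula---is exactly what the authors implicitly expect the reader to supply. The extra self-contained derivation you offer goes beyond what the paper requires.
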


\vspace{3mm}

 Recall that every irreducible $\GL(V)$-invariant vector bundle on $\Gr(k,n)$ is isomorphic to $\Sigma^{\beta} \cQ
 \otimes \Sigma^{\gamma}\cS^{\vee}$ for some non-increasing  $\beta =(b_1, \cdots , b_{k+1})\in \ZZ^{k+1}$ and $\gamma
 =(a_1,\cdots,a_{n-k})\in \ZZ^{n-k}$. The Borel-Bott-Weil Theorem is a powerful tool that, in particular, computes the
 cohomology of $\GL(V)$-equivariant vector bundles on $\Gr(k,n)$. To state it, recall that   for every $\alpha \in
 \ZZ^{n+1}$ there exists an element $\sigma $ of the Weyl group $S_{n+1}$ of $\GL(V)$ such that $\sigma(\alpha)$ is non
 increasing and it is unique if and only if all the entries of $\alpha$ are distinct. Denote by
 $\rho=(n+1,n,n-1,\cdots,2,1)$ the half sum of the positive roots of $\GL(V)$ and by $l:S_n \rightarrow \ZZ$ the
 standard length function.

\begin{theorem}{\bf (Borel-Bott-Weil)}
\label{bott}
Let $\beta =(b_1, \cdots , b_{k+1})\in \ZZ^{k+1}$ and $\gamma =(a_1,\cdots,a_{n-k})\in \ZZ^{n-k}$ be two non-increasing
sequences  and let $\alpha=(\beta,\gamma) \in \ZZ^{n+1}$ be their concatenation.

\begin{itemize}
\item[(a)] Assume that all entries of $\alpha + \rho $ are distinct and let $\sigma$ be the unique permutation such
    that $\sigma(\alpha+\rho)$ is strictly decreasing. Then

\[ \Hl^m(\Gr(k,n), \Sigma^{\beta} \cQ \otimes \Sigma^{\gamma}\cS^{\vee})= \left \{ \begin{array}{ll}
\Sigma^{\sigma(\alpha+\rho)-\rho} V^* & \mbox{if} \quad m=l(\sigma) \\
0 & \mbox{otherwise.}
 \end{array} \right. \]
\item[(b)] If at least two entries of $\alpha + \rho $ coincide then $$
\Hl^{\bullet}(\Gr(k,n), \Sigma^{\beta} \cQ \otimes \Sigma^{\gamma}\cS^{\vee})= 0.$$
\end{itemize}
\end{theorem}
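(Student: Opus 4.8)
The statement is the classical Borel--Bott--Weil theorem for the rational homogeneous space $\Gr(k,n)=\GL(V)/P$, so one legitimate option is simply to quote it; here is the argument I would reproduce.

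\textbf{Step 1: reduction to the full flag variety.} The plan is first to pass to the variety $\mathrm{Fl}(V)=\GL(V)/B$ of complete flags of $V$, with its projection $\pi\colon\mathrm{Fl}(V)\to\Gr(k,n)$, whose fibre is $P/B\cong\mathrm{Fl}(K^{k+1})\times\mathrm{Fl}(K^{n-k})$. Refining the tautological flag produces line bundles on $\mathrm{Fl}(V)$; for the suitable one, $\cL_{\alpha}$, indexed by the concatenation $\alpha=(\beta,\gamma)$, the relative Borel--Weil theorem applied fibrewise (using that $\beta$ and $\gamma$ are non-increasing) gives $\pi_{*}\cL_{\alpha}\cong\Sigma^{\beta}\cQ\otimes\Sigma^{\gamma}\cS^{\vee}$ and $R^{j}\pi_{*}\cL_{\alpha}=0$ for $j>0$. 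By the Leray spectral sequence, $\Hl^{m}(\Gr(k,n),\Sigma^{\beta}\cQ\otimes\Sigma^{\gamma}\cS^{\vee})\cong\Hl^{m}(\mathrm{Fl}(V),\cL_{\alpha})$ for all $m$, so it is enough to prove Bott's theorem for line bundles on $\GL(V)/B$.

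\textbf{Step 2: $\PP^{1}$-fibrations.} For each simple root $\alpha_{i}$ of $\GL(V)$ I would use the associated $\PP^{1}$-bundle $p_{i}\colon\GL(V)/B\to\GL(V)/P_{i}$. Restricted to a fibre, $\cL_{\lambda}$ becomes $\odi{\PP^{1}}(d)$ with $d=\langle\lambda+\rho,\alpha_{i}^{\vee}\rangle-1$, and the cohomology of $\odi{\PP^{1}}(d)$ breaks into three cases: if $d\ge 0$ then $R^{1}p_{i*}\cL_{\lambda}=0$; if $d=-1$, i.e.\ $\langle\lambda+\rho,\alpha_{i}^{\vee}\rangle=0$, then $R^{\bullet}p_{i*}\cL_{\lambda}=0$ and hence $\Hl^{\bullet}(\GL(V)/B,\cL_{\lambda})=0$; and if $d\le -2$, i.e.\ $\langle\lambda+\rho,\alpha_{i}^{\vee}\rangle<0$, then $p_{i*}\cL_{\lambda}=0$ while, by relative Serre duality on the $\PP^{1}$-bundle together with the cohomology of $\odi{\PP^{1}}(d)$, one gets $R^{1}p_{i*}\cL_{\lambda}\cong p_{i*}\cL_{s_{i}\cdot\lambda}$, where $s_{i}\cdot\lambda:=s_{i}(\lambda+\rho)-\rho$. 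Since at most one $R^{j}p_{i*}$ is nonzero, the Leray spectral sequence of $p_{i}$ degenerates, yielding, whenever $\langle\lambda+\rho,\alpha_{i}^{\vee}\rangle<0$, a natural isomorphism
\[ \Hl^{m}(\GL(V)/B,\cL_{\lambda})\ \cong\ \Hl^{m-1}(\GL(V)/B,\cL_{s_{i}\cdot\lambda})\qquad(m\in\ZZ). \]

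\textbf{Step 3: induction on $l(\sigma)$ and descent.} Suppose all entries of $\alpha+\rho$ are distinct and write $\sigma=s_{i_{1}}\cdots s_{i_{r}}$ as a reduced expression, $r=l(\sigma)$, chosen so that successively reflecting moves $\alpha+\rho$ into the strictly dominant chamber; at the $j$-th stage the current weight $\mu^{(j)}$ satisfies $\langle\mu^{(j)}+\rho,\alpha_{i_{j}}^{\vee}\rangle<0$ precisely because the word is reduced. Applying Step 2 $r$ times shifts the cohomological degree up by $1$ at each stage and replaces $\cL_{\alpha}$ by $\cL_{\mu}$ with $\mu:=\sigma(\alpha+\rho)-\rho$ dominant; then the classical Borel--Weil theorem gives $\Hl^{0}(\GL(V)/B,\cL_{\mu})=\Sigma^{\mu}V^{*}$ (with the duality convention fixed in the paper), while $\Hl^{>0}(\GL(V)/B,\cL_{\mu})=0$ by Kodaira vanishing, since $\omega_{\GL(V)/B}^{-1}=\cL_{2\rho}$ and $\cL_{\mu}\otimes\omega_{\GL(V)/B}^{-1}=\cL_{\mu+2\rho}$ is ample ($\mu+2\rho=\sigma(\alpha+\rho)+\rho$ being strictly dominant). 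Reading the chain of $r$ isomorphisms backwards gives $\Hl^{m}(\GL(V)/B,\cL_{\alpha})=\Sigma^{\sigma(\alpha+\rho)-\rho}V^{*}$ for $m=l(\sigma)$ and $0$ otherwise. If instead $\alpha+\rho$ has a repeated entry, then since a singular weight can never be strictly dominant, the same (terminating) reflection procedure applied to $\alpha$ must at some stage reach a weight with $\langle\,\cdot+\rho,\alpha_{i}^{\vee}\rangle=0$ for a simple root $\alpha_{i}$, and the $d=-1$ case of Step 2, together with the chain of isomorphisms, forces $\Hl^{\bullet}(\GL(V)/B,\cL_{\alpha})=0$; this is part (b). Finally Step 1 transports both conclusions back to $\Gr(k,n)$.

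The delicate points I expect are the $d\le-2$ case of Step 2 --- establishing the isomorphism $R^{1}p_{i*}\cL_{\lambda}\cong p_{i*}\cL_{s_{i}\cdot\lambda}$ by relative duality and checking that the resulting degree shift is exactly $+1$ --- and the combinatorics of Step 3, namely that the chosen reduced word realizes $\sigma$ and that each intermediate weight meets the required negativity condition; I would isolate both as lemmas, the rest being formal.
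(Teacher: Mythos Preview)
Your outline is a correct rendition of the standard Demazure-style proof of Borel--Bott--Weil: reduce to line bundles on the full flag variety via Leray, then use the $\PP^{1}$-fibrations associated with simple reflections to shift cohomological degree while moving the weight towards the dominant chamber, and finish with Borel--Weil plus Kodaira vanishing. The two points you flag as delicate (the relative duality computation yielding $R^{1}p_{i*}\cL_{\lambda}\cong p_{i*}\cL_{s_{i}\cdot\lambda}$, and the combinatorics guaranteeing that a reduced word for $\sigma$ produces the right negativity conditions at each step) are indeed the only places where real work is needed, and your plan handles them appropriately.

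That said, you should be aware that the paper gives \emph{no proof at all} of this statement: it is stated as the classical Borel--Bott--Weil theorem and used as a black box throughout Section~3. So there is nothing to compare your approach against; your option of ``simply quoting it'' is exactly what the authors do. If you want to include a proof, what you have written is fine as a sketch, but it goes well beyond what the paper itself provides.
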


\vspace{3mm} Notice that Theorem \ref{bott}, in particular, implies that any twist of an irreducible $\GL(V)$-invariant
vector bundle has at most cohomology concentrated in one degree.



\section{Characterization of Ulrich GL(V)-invariant bundles on $\Gr(k,n)$}

This section contains the main new result of this work, namely, the complete classification of   all irreducible $\GL(V)$-invariant Ulrich bundles on $\Gr(k,n)$. In particular, we have  that  any Grassmann variety $\Gr(k,n) \hookrightarrow \PP(\bigwedge^{k+1} V)$ embedded by the Pl\"{u}cker embedding supports an Ulrich bundle giving  for the case of Grassmann varieties, an affirmative answer to Problem \ref{problem} (a).

\vspace{2mm}

 Let us start with two technical results.

\vspace{3mm}

\begin{remark}
\label{ordenacio}
Let $\cE=\Sigma^{(b_1,\cdots,b_{k+1})}Q \otimes \Sigma^{(a_1,\cdots,a_{s})}S^{\vee}$ be an irreducible
$\GL(V)$-invariant vector bundle on the Grassmann variety $\Gr(k,n)$. Notice that if $\cE$ is initialized, we have
$\Hl^0(\Gr(k,n),\cE)\ne 0$ and
$\Hl^{i}(\Gr(k,n),\cE)=0$ for any $i>0$. Therefore, applying Theorem \ref{bott}, if $\cE$ is initialized we have
\[ b_1 \geq \cdots \geq b_{k+1}=a_1 \geq \cdots \geq a_s>0. \]
\end{remark}

\vspace{3mm}

To compute the slope of any Ulrich bundle on a $\Gr(k,n)$ we will need the following result

\vspace{3mm}

\begin{lemma}
\label{slope}
Let $\beta =(\beta_1, \cdots ,\beta_{k+1})\in \ZZ^{k+1}$ and $\gamma =(\gamma_1,
\cdots+\gamma_{n-k})\in \ZZ^{n-k}$ be two non-increasing sequences. Then,
\[ \mu(\Sigma^{\beta} \cQ \otimes \Sigma^{\gamma}\cS^{\vee}) = \Big ( \frac{|
\beta_1+ \cdots+\beta_{k+1}|}{k+1}-\frac{|\gamma_1+\cdots+\gamma_{n-k}|}{n-k} \Big ) \cdot d \]
with $d:=\de(\Gr(k,n))= \frac{((k+1)(n-k))! k!(k-1)! \cdots 2!}{n!(n-1)! \cdots (n-k)!}$.
\end{lemma}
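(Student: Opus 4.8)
The plan is to reduce the computation of $\mu(\cE)$ for $\cE:=\Sigma^{\beta}\cQ\otimes\Sigma^{\gamma}\cS^{\vee}$ to a first-Chern-class computation in $\Pic(\Gr(k,n))\otimes\mathbb{Q}$, which is generated by the Pl\"ucker class $H:=c_1(\cO_G(1))$, and then to pass to degrees using that $d=\deg(\Gr(k,n))$ is the top self-intersection number of $H$. Concretely, I would first isolate the elementary fact that for every vector bundle $\cF$ of rank $r$ and every partition $\lambda$ with at most $r$ parts,
\[ c_1(\Sigma^{\lambda}\cF)\;=\;\frac{|\lambda|}{r}\,\rk(\Sigma^{\lambda}\cF)\,c_1(\cF),\qquad |\lambda|:=\lambda_1+\cdots+\lambda_r. \]
By the splitting principle one may assume $\cF$ has a filtration with line-bundle quotients $L_1,\dots,L_r$; then $\Sigma^{\lambda}\cF$ has a filtration whose successive quotients are the line bundles $\bigotimes_i L_i^{\otimes m_i}$, each appearing with multiplicity the Kostka number $K_{\lambda,(m_1,\dots,m_r)}$. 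Hence $c_1(\Sigma^{\lambda}\cF)=\sum_{(m)}K_{\lambda,(m)}\sum_i m_i\,c_1(L_i)$; since the Kostka numbers are invariant under the $S_r$-action permuting the $m_i$, the coefficient $\sum_{(m)}K_{\lambda,(m)}m_j$ is independent of $j$, and summing over $j$ gives $|\lambda|\cdot\dim_K(\Sigma^{\lambda}K^r)$, whence the displayed formula (with $\rk(\Sigma^{\lambda}\cF)$ as in Lemma \ref{dim}). Alternatively this can simply be quoted from a standard reference on Schur functors.

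With this in hand the rest is bookkeeping. Writing $R_1:=\rk(\Sigma^{\beta}\cQ)$ and $R_2:=\rk(\Sigma^{\gamma}\cS^{\vee})$, so that $\rk(\cE)=R_1R_2$, the identity $c_1(A\otimes B)=\rk(B)\,c_1(A)+\rk(A)\,c_1(B)$ together with the formula above gives
\[ c_1(\cE)\;=\;R_1R_2\left(\frac{|\beta|}{k+1}\,c_1(\cQ)+\frac{|\gamma|}{n-k}\,c_1(\cS^{\vee})\right),\qquad |\beta|=\sum_i\beta_i,\ \ |\gamma|=\sum_i\gamma_i. \]
From the universal sequence $0\to\cS^{\vee}\to V\otimes\cO_G\to\cQ\to 0$ one has $c_1(\cQ)=-c_1(\cS^{\vee})$, and since $\bigwedge^{k+1}\cQ\cong\cO_G(1)$ this forces $c_1(\cQ)=H$ and $c_1(\cS^{\vee})=-H$. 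Therefore $c_1(\cE)=\rk(\cE)\big(\tfrac{|\beta|}{k+1}-\tfrac{|\gamma|}{n-k}\big)H$, and intersecting with $H^{\dim\Gr(k,n)-1}$ yields $\deg(c_1(\cE))=\rk(\cE)\big(\tfrac{|\beta|}{k+1}-\tfrac{|\gamma|}{n-k}\big)d$. Dividing by $\rk(\cE)$ gives the stated slope. Since this lemma will only be applied to initialized bundles, for which the entries of $\beta$ and $\gamma$ are nonnegative (Remark \ref{ordenacio}), the two sums $\sum_i\beta_i$ and $\sum_i\gamma_i$ are nonnegative and the absolute values in the statement may be dropped without changing anything.

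I expect the only genuinely delicate point to be the first-Chern-class formula for Schur functors, i.e. verifying that $c_1(\Sigma^{\lambda}\cF)$ is the asserted rational multiple of $c_1(\cF)$ — the $S_r$-symmetry of the Kostka numbers is exactly what makes the coefficient uniform across the Chern roots. Everything after that (multiplicativity of ranks, the tensor-product formula for $c_1$, the identification of $c_1(\cQ)$ and $c_1(\cS^{\vee})$ with $\pm H$, and the passage from divisor classes to degrees via $\deg(\Gr(k,n))=d$) is routine.
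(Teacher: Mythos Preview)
Your argument is correct and follows essentially the same route as the paper: reduce to $\mu(\Sigma^{\lambda}\cF)=|\lambda|\,\mu(\cF)$, use additivity of slopes under tensor product, and plug in $\mu(\cQ)=d/(k+1)$, $\mu(\cS^{\vee})=-d/(n-k)$. The only difference is that the paper simply quotes the Schur-functor slope identity from \cite{Rubei}, whereas you supply a self-contained proof of the equivalent $c_1$ statement via the splitting principle and the $S_r$-symmetry of the Kostka numbers; your closing remark on the absolute-value notation is also apt, since in the paper $|\cdot|$ is just shorthand for the sum of the entries.
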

\begin{proof} Recall that for any rank $r$ vector bundle $\cE$ on $\Gr(k,n)$ and any
 non-increasing sequence $\lambda \in \ZZ^r$, $\mu(\Sigma^{\lambda}\cE)= |\lambda_1+\cdots+\lambda_r|\cdot \mu(\cE)$
 (\cite{Rubei}, Pg. 4). Hence, since by the multiplicative character of the Chern classes
\[ \begin{array}{lll}\mu(\Sigma^{\beta} \cQ \otimes \Sigma^{\gamma}\cS^{\vee}) & = &
\frac{\de(c_1(\Sigma^{\beta} \cQ \otimes \Sigma^{\gamma}\cS^{\vee})) }{\rk(\Sigma^{\beta} \cQ \otimes
\Sigma^{\gamma}\cS^{\vee})}   \\
& = &
 \frac{\de(c_1(\Sigma^{\beta} \cQ )\rk(\Sigma^{\gamma}\cS^{\vee})+c_1(
  \Sigma^{\gamma}\cS^{\vee})\rk (\Sigma^{\beta} \cQ ) )}{\rk(\Sigma^{\beta}
 \cQ){\cdot}\rk(\Sigma^{\gamma}\cS^{\vee})}   \\
 & = &
\mu(\Sigma^{\beta} \cQ)+ \mu( \Sigma^{\gamma}\cS^{\vee}),\end{array}\] the result follows from the fact that $\mu(\cQ)=
\frac{d}{k+1}$ and $\mu(\cS^{\vee})=- \frac{d}{n-k}$.
\end{proof}
\vspace{3mm}

Before facing the main result, we are going to give a necessary condition for a $\GL(V)$-invariant
vector bundle to be an Ulrich bundle.

\begin{proposition}
\label{b1}
Let $\cE=\Sigma^{(b_1,\cdots,b_{k+1})}Q \otimes \Sigma^{(a_1,\cdots,a_{s})}S^{\vee}$ be an irreducible initialized
$\GL(V)$-invariant vector bundle on the Grassmann variety $\Gr(k,n)$. If $\cE$ is an initialized Ulrich bundle then
\[b_1=k(n-k-1). \]
\end{proposition}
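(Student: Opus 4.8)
The strategy is to use the cohomological characterization of Ulrich bundles from Remark \ref{pu}: since $\cE$ is initialized and Ulrich, we need $\Hl^i(\Gr(k,n), \cE(t)) = 0$ for all $i \geq 0$ and all $-d \leq t \leq -1$, where $d = (k+1)(n-k)$ is the dimension of the Grassmannian. By the Borel-Bott-Weil theorem (Theorem \ref{bott}), the twist $\cE(t) = \Sigma^{(b_1+t,\cdots,b_{k+1}+t)}\cQ \otimes \Sigma^{(a_1,\cdots,a_s)}\cS^{\vee}$ has vanishing cohomology in all degrees precisely when the sequence $\alpha + \rho$ has a repeated entry, where $\alpha = (b_1+t, \cdots, b_{k+1}+t, a_1, \cdots, a_s)$ (padded with zeros) and $\rho = (n+1, n, \cdots, 2, 1)$. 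So the plan is: for \emph{every} value $t \in \{-1, -2, \cdots, -d\}$, the sequence $\alpha+\rho$ must have a collision.

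First I would write out $\alpha + \rho$ explicitly. Writing $s = n-k$ and recalling from Remark \ref{ordenacio} that $b_1 \geq \cdots \geq b_{k+1} = a_1 \geq \cdots \geq a_s > 0$, the entries coming from the $\cQ$-part are $b_i + t + (n+2-i)$ for $i = 1, \cdots, k+1$, and the entries from the $\cS^{\vee}$-part are $a_j + (n-k+1-j) = a_j + (s+1-j)$ for $j = 1, \cdots, s$. As $t$ decreases from $-1$, the "$\cQ$-block" of entries slides downward past the fixed "$\cS^{\vee}$-block". A collision between the two blocks happens exactly when some $b_i + t + (n+2-i) = a_j + (s+1-j)$, i.e. for $t$ in a certain finite set of integers determined by the $b_i$ and $a_j$. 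The key observation is that collisions \emph{within} the $\cQ$-block are impossible for any $t$ (the $b_i + (n+2-i)$ are strictly decreasing since $\beta$ is non-increasing), and likewise within the $\cS^{\vee}$-block. Hence for the required vanishing to hold for all $d$ consecutive values of $t$, the set of "bad" $t$ coming from block collisions must contain the full interval $\{-1, \cdots, -d\}$. I would then count: the range of values of $t$ for which the sliding $\cQ$-block can possibly overlap the $\cS^{\vee}$-block is controlled by how far $b_1 + (n+1)$ (the top $\cQ$-entry at $t=0$) sits above the $\cS^{\vee}$-entries and how far $b_{k+1} + (n+2-(k+1)) = b_{k+1} + (n-k+1) = a_1 + (s+1)$ (the bottom $\cQ$-entry at $t=0$) can be pushed below them. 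A clean way to organize this: the "window" of $t$ during which at least one $\cQ$-entry lies weakly between $\min$ and $\max$ of the $\cS^{\vee}$-entries has length roughly $b_1 - a_s + (n - k)$-ish, and one needs this to be at least $d = (k+1)(n-k)$; but one must be more careful, because even inside the window a given $t$ is only "good" (cohomology-vanishing) if a collision actually occurs, not merely an overlap of ranges.

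This last point is where the real work lies, and it is the main obstacle: the vanishing for \emph{all} $d$ consecutive $t$ forces the $\cQ$-entries and $\cS^{\vee}$-entries to interleave perfectly, with no gaps, as the block slides. The cleanest route is to argue by a pigeonhole/counting comparison. For a fixed $t$, the multiset $\{b_i + t + (n+2-i)\} \cup \{a_j + (s+1-j)\}$ has $n+1$ entries; it has a repetition iff these $n+1$ integers are not all distinct. Consider the integer interval $I_t = [\,b_{k+1}+t+(n-k+1),\ b_1+t+(n+1)\,]$ spanned by the $\cQ$-entries; a collision requires some $\cS^{\vee}$-entry to land inside $I_t$ \emph{and} actually coincide with a $\cQ$-entry. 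Since the $s$ values $a_j + (s+1-j)$ are fixed and the $k+1$ values $b_i + t + (n+2-i)$ translate rigidly with $t$, I would track, as $t$ runs over $\{-1, \cdots, -d\}$, the total number of $(i,j)$-collision events; each pair $(i,j)$ contributes a collision for exactly one value of $t$ (namely $t = a_j + (s+1-j) - b_i - (n+2-i)$), so there are at most $(k+1)s = (k+1)(n-k) = d$ such events in total. Requiring a collision at each of the $d$ values $t = -1, \cdots, -d$ therefore forces \emph{exactly} one collision per such $t$, and forces every collision-value $t_{ij} = a_j + (s+1-j) - b_i - (n+2-i)$ to lie in $\{-1, \cdots, -d\}$ and to be attained exactly once. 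In particular, the extreme collision value must be $t = -1$: the pair maximizing $t_{ij}$ is $(i,j) = (k+1, 1)$ (smallest $b_i$, largest $a_j$-index contribution... I would verify the monotonicity carefully), giving $t_{k+1,1} = a_1 + s - b_{k+1} - (n-k+1) = -b_{k+1} - 1 + b_{k+1}$... here I substitute $a_1 = b_{k+1}$, obtaining $t_{k+1,1} = s - (n-k+1) = -1$ automatically, so the real constraint comes from the other end. Symmetrically the minimal collision value is $t_{1,s} = a_s + 1 - b_1 - (n+1) = a_s - b_1 - n$, and this must equal $-d = -(k+1)(n-k)$, i.e. $b_1 = a_s - n + (k+1)(n-k) = a_s + (k+1)(n-k) - n$. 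Finally, to pin down $b_1 = k(n-k-1)$ exactly I would feed in the additional constraint that no collisions occur \emph{within} the window at $t = 0$ together with the fact (again from Borel-Bott-Weil applied to $\cE$ itself being initialized with nonzero $\Hl^0$) that forces $a_s$ to a specific value — chasing through, $a_s = 1$ is forced by initialization combined with the requirement that $\cE(-1)$ already have a collision, and then $b_1 = 1 + (k+1)(n-k) - n = 1 + (k+1)(n-k) - (n-k) - k = 1 + k(n-k) - k = 1 + k(n-k-1)$; I would recheck this arithmetic, as it should collapse to $b_1 = k(n-k-1)$ (the discrepancy of $1$ signals I need to track the half-open versus closed interval convention for "collision at $t=-d$ versus $t=-d-1$" in Remark \ref{pu} more carefully). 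The honest summary: the heart of the argument is the pigeonhole identity "$d$ required collisions $\leq (k+1)(n-k) = d$ available collision pairs $\Rightarrow$ everything is tight", and extracting $b_1$ then amounts to locating the extremal collision pair and doing the index bookkeeping.
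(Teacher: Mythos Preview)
Your approach is valid in spirit but different from the paper's, and your off-by-one is a genuine bookkeeping slip rather than a convention issue.

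The paper's proof is a two-line sandwich using only the twists $t=-d$ and $t=-d-1$. From the Ulrich property one has $\Hl^d(\cE(-d))=0$, while $\pi_*\cE\cong\cO_{\PP^d}^t$ gives $\Hl^d(\cE(-d-1))\cong\Hl^d(\PP^d,\cO(-d-1))^t\ne0$. By Borel--Bott--Weil, $\Hl^d$ is nonzero precisely when every $\cQ$-entry of $\alpha+\rho$ lies strictly below every $\cS^\vee$-entry, i.e.\ when $b_1+t+(n+1)<a_{n-k}+1$. The vanishing at $t=-d$ thus forces $b_1\ge a_{n-k}+d-n$, and the non-vanishing at $t=-d-1$ forces $b_1\le a_{n-k}+d-n$. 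With the standing normalization $a_{n-k}=0$ this gives $b_1=d-n=(k+1)(n-k)-n=k(n-k-1)$ directly.

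Your pigeonhole argument is correct: the $(k+1)(n-k)=d$ collision values $t_{ij}$ must biject onto $\{-1,\ldots,-d\}$, so the minimal one, namely $t_{1,n-k}=a_{n-k}+1-b_1-(n+1)$, equals $-d$. Your discrepancy arises because you wrote $s=n-k$ and then imported the inequality $a_s>0$ from Remark~\ref{ordenacio}. In that remark $s$ denotes the number of \emph{nonzero} parts of $\gamma$; the full weight fed into Borel--Bott--Weil has length $n-k$, and by the convention stated before Lemma~\ref{dim} one takes $a_{n-k}=0$. Substituting $a_{n-k}=0$ into $t_{1,n-k}=-d$ gives $b_1=d-n=k(n-k-1)$ exactly, with no residual $+1$ and no need for the (false) claim that ``$a_s=1$ is forced''.

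What your route buys is a structural statement the paper's shortcut does not: the bijection between pairs $(i,j)$ and twists $\{-1,\ldots,-d\}$ is precisely the engine behind the converse direction of Theorem~\ref{main}. For the present proposition alone it is more machinery than required, but it is not wasted effort.
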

\begin{proof}
Recall that for any integer $t \in \ZZ$,
\[\cE(t)=\Sigma^{(b_1+t,\cdots,b_{k+1}+t)}Q \otimes \Sigma^{(a_1,\cdots,a_{s})}S^{\vee} \]
and by Proposition \ref{ulrichslope}, $(b)$ we have
\[ \Hl^d(X,\cE(-(k+1)(n-k)))=0 \quad \mbox{and } \Hl^d(X,\cE(-(k+1)(n-k)-1)) \neq 0\]
where $d=\dim(\Gr(k,n))=(k+1)(n-k)$. It follows from Theorem \ref{bott} that the first vanishing forces $b_1 \geq
k(n-k-1)$ while the fact that
$\Hl^d(X,\cE(-(k+1)(n-k)-1)) \neq 0$ implies $b_1 \leq k(n-k-1)$. Putting together both inequalities we get
$b_1=k(n-k-1)$.
 \end{proof}

\vspace{3mm}

The following example will illustrate the idea behind the main result.

\vspace{5mm}

\begin{example}
\label{mainexample}
We consider the Grassmannian $\Gr(5,17)$, i.e. $k+1=6$ and $n-k=12$, together with two ordered sequences of integers of the same length
\[(k_1,k_2)=(2,3) \quad \mbox{and} \quad (n_1,n_2)=(3,4)\]
such that $k+1=k_1\cdot k_2$ and $n-k=n_1\cdot n_2$. We are going to  associate  univocally to them a
$\GL(V)$-invariant Ulrich bundle $\cE=\Sigma^{(b_1,\cdots,b_6)}\cQ \otimes \Sigma^{(a_1, \cdots,a_{12})} \cS^{\vee}$ on
$\Gr(5,17)$.  Given the pair $(n_1,k_1)=(3,2)$ we consider the $3 \times 2$ block $B_1$
with 3 columns and 2 rows filled in by integers in the following way
\vspace{3mm}
$$
\centerline{
\begin{tabular}{|c|c|c|} \hline
4 & 5 & 6  \\  \hline 1 & 2 & 3 \\ \hline
\end{tabular}}
$$
\[ \mbox{{\footnotesize Block of type $B_1$}} \]

Now, we consider the next pair $(n_2,k_2)=(4,3)$ and we built the block $B_2$ with 3 rows of 4 blocks of type $B_1$ on
each row. Place first the 4 blocks of the first row, then the 4 blocks of the second row and finally the 4 blocks of
the last row, each block filled in by integers as shown in the picture:

\vspace{3mm}

$$
\centerline{
\begin{tabular}{|c|c|c|| c | c| c || c|c|c|| c | c| c |}
\hline
52 & 53 & 54 & 58 & 59 & 60 & 64 & 65 & 66 & 70 & 71 & 72   \\
\hline
49 & 50 & 51 & 55 & 56 & 56 & 61 & 62 & 63 & 67 & 68 & 69 \\
\hline \hline 28 & 29 & 30 & 34 & 35 & 36 & 40 & 41 & 42 & 46 & 47 & 48 \\
\hline 25 & 26 & 27 & 31 & 32 & 33 & 37 & 38 & 39 & 43 & 44 & 45\\
\hline \hline 4 & 5 & 6 & 10 & 11 & 12 & 16 & 17 & 18 & 22& 23 &24 \\
\hline 1& 2& 3 & 7 & 8 & 9 & 13& 14 & 15 & 19 & 20 & 21 \\
\hline
\end{tabular}}
$$
\[ \mbox{{\footnotesize Block of type $B_2$}} \]

We define two non-increasing sequences of integers
{ \scriptsize
\[\begin{array}{ll}(a_1,a_2,a_3,a_5,a_5,a_6,a_7,a_8,a_9,a_{10},a_{11},a_{12}) & := (21,20,19,15,14,13,9,8,7,3,2,1)-
(12,11,10,9,8,7,6,5,4,3,2,1)\\ & = (9,9,9,6,6,6,3,3,3,0,0,0)\end{array}\]
\[\begin{array}{ll}(b_1,b_2,b_3,b_4,b_5,b_6) & := (52,49,28,25,4,1)-(18,17,16,15,14,13)+21(1,1,1,1,1,1) \\ & =
(55,53,33,31,11,9). \end{array}\]}

Let us see that $\cE=\Sigma^{(b_1,\cdots,b_6)}\cQ \otimes \Sigma^{(a_1, \cdots,a_{12})} \cS^{\vee}$ is an Ulrich bundle on the
72-dimensional Grassmannian $\Gr(5,17)$. First of all notice that $\cE$ is an initialized vector bundle. Indeed, since $b_6=a_1$ and $(b_1,b_2, \cdots,b_6,a_1, \cdots, a_{12})$ is a non-increasing sequence by Theorem \ref{bott}, $\Hl^0(\cE(-1))=0$ and $\Hl^0(\cE) \neq 0$. Hence, according to Remark
\ref{pu}, we only need to check that for any integer $t \in  [1,72 ]$,
\[ \Hl^i(\cE(-t))=0 \quad 0 \leq i \leq 72.\]
To this end, we observe that any integer $t \in [1,72]$ corresponds to a unique point of position $(i,7-j)$ for some $1
\leq i \leq 12$, $1 \leq j \leq 6$ inside the block $B_2$. Moreover, if $t$ is in the position $(i,7-j)$ then
\[b_j+17-j+2-t=a_i+12-(i-1). \]
That means that for any integer $t \in [1,72]$ at least two entries of
{\scriptsize
\[(b_1-t,b_2-t,b_3-t,b_4-t,b_5-t,b_6-t,a_1,a_2,a_3,a_5,a_5,a_6,a_7,a_8,a_9,a_{10},a_{11},a_{12}) +\] \[
(18,17,16,15,14,13,12,11,10,9,8,7,6,5,4,3,2,1)\]}
coincide. Hence, by Theorem \ref{bott} this means that for
for any integer $t \in  [1,72 ]$,
\[ \Hl^i(\cE(-t))=0, \quad 0 \leq i \leq 72.\]
Therefore, $\cE$ is an Ulrich bundle on $\Gr(5,17)$.
\end{example}

\vspace{3mm}
Let us fix a definition that we will use in the statement of the main result.

\begin{definition}
\label{notacioteorema}
For any $k,n \in \NN $ and for any pair of ordered sequences of positive integers of the same length $(k_1, \cdots,k_s)$ and
$(n_1, \cdots,n_s)$ with $n_l >1$ for $l >1$, $k_l > 1$ for $l < s$,
\[ \begin{array}{l} k+1=k_1 \cdot k_2 \cdots k_s, \mbox{ and}\\
 n-k=n_1 \cdot n_2 \cdots n_s, \end{array} \]
we define:
\[ \alpha= n_1+\sum_{i=1}^{s-1} (n_{s-i+1}-1)k_1 \cdots k_{s-i} \cdot n_1 \cdots n_{s-i}.\]

For any integer $j$, $1 \leq j \leq k+1$, if
$$j=j_1\cdot k_1\cdots k_{s-1}+ j_2\cdot k_1\cdots k_{s-2}+ \cdots +j_{s-2}k_1k_2+j_{s-1}k_1+\rho_j$$
with $1 \leq \rho_j \leq k_1$ and $0 \leq j_l < k_{s-l+1}$ for $1 \leq l \leq s-1$ we define
\[b_{k+2-j}=\alpha+(\rho_j-1)n_1+1+j_{s-1}k_1n_1n_2+j_{s-2}k_1k_2n_1n_2n_3+ \cdots+j_1k_1k_2\cdots k_{s-1}n_1 \cdots
n_s. \]

For any integer $i$, $1 \leq i \leq n-k$, if
$$i=i_1\cdot n_1\cdots n_{s-1}+ n_2\cdot n_1\cdots n_{s-2}+ \cdots +i_{s-2}n_1n_2+i_{s-1}n_1+\rho_i$$
with $1 \leq \rho_i \leq n_1$ and $0 \leq i_l \leq n_{s-l+1}$ for $1 \leq l \leq s-1$ we define
\[a_{n-k+1-j}=\rho_i+1-i+i_{s-1}k_1n_1+i_{s-2}k_1k_2n_1n_2+ \cdots+i_1k_1k_2\cdots k_{s-1}n_1 \cdots n_{s-1}. \]
Finally, we define by
\[ \cE^{(k_1, \cdots,k_s)}_{(n_1, \cdots,n_s)} := \Sigma^{(b_1, \cdots,b_{k+1})} \cQ \otimes \Sigma^{(a_1,
\cdots,a_{n-k})} \cS^{\vee} \]
the corresponding $\GL(V)$-invariant vector bundle on $\Gr(k,n)$ with the $a_i$, $1 \leq i \leq n-k$ and $b_j$, $1 \leq
j \leq k+1$ given as above.
\end{definition}

\vspace{3mm}
\begin{theorem}
\label{main} Let $\Gr(k,n) \hookrightarrow \PP(\bigwedge^{k+1} V)$ be the Grassmannian embedded by the Pl\"{u}cker
embedding. Then, for any pair of ordered sequences of positive integers of the same length $(k_1, \cdots,k_s)$ and $(n_1,
\cdots,n_s)$  such that $n_l >1$ for $l >1$, $k_l > 1$ for $l < s$,
\[ k+1=k_1 \cdot k_2 \cdots k_s, \text{ and }  n-k=n_1 \cdot n_2 \cdots n_s,\]
the $\GL(V)$-invariant vector bundle
\[ \cE=\cE^{(k_1, \cdots,k_s)}_{(n_1, \cdots,n_s)}  \]
is an Ulrich bundle on $\Gr(k,n)$. Conversely, any initialized $\GL(V)$-invariant Ulrich bundle on $\Gr(k,n)$ is of
this form for some ordered sequences of positive integers of the same length $(k_1, \cdots,k_s)$ and $(n_1, \cdots,n_s)$  with $n_l >1$ for $l >1$, $k_l > 1$ for $l < s$,
\[ k+1=k_1 \cdot k_2 \cdots k_s, \text{ and }  n-k=n_1 \cdot n_2 \cdots n_s.\]
\end{theorem}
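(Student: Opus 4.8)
The plan is to prove the two directions separately, both resting entirely on the Borel–Bott–Weil theorem (Theorem \ref{bott}) and the cohomological criterion for Ulrich bundles in Remark \ref{pu}: an initialized irreducible $\GL(V)$-invariant bundle $\cE = \Sigma^\beta \cQ \otimes \Sigma^\gamma \cS^\vee$ on $\Gr(k,n)$, with $d = \dim \Gr(k,n) = (k+1)(n-k)$, is Ulrich if and only if for every integer $t$ with $1 \le t \le d$ the sequence $(\beta - t, \gamma) + \rho$ has a repeated entry. Concretely, writing $\rho = (n+1, n, \ldots, 1)$, the entries attached to the $\cQ$-part are $b_j + (k+2-j) + (n-k)$ for $j = 1, \ldots, k+1$ (i.e. $b_j - t$ shifted) and the entries attached to the $\cS^\vee$-part are $a_i + (n-k+1-i)$ for $i = 1, \ldots, n-k$. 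So the whole question reduces to a purely combinatorial statement: for each $t \in \{1, \ldots, d\}$ there is a pair $(i,j)$ with
\[ b_j + (k+2-j) + (n-k) - t = a_i + (n-k+1-i), \]
and moreover (for the ``only if'' direction) one needs that no repetition occurs among the entries of $(\beta,\gamma)+\rho$ itself and among $(\beta-(d+1),\gamma)+\rho$ precisely at the $b_1$ slot, so that $\cE$ is genuinely initialized and the top cohomology at twist $-(d+1)$ is nonzero — exactly the Ulrich condition via Proposition \ref{ulrichslope}(ii).

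For the forward direction I would take the bundle $\cE^{(k_1,\ldots,k_s)}_{(n_1,\ldots,n_s)}$ of Definition \ref{notacioteorema} and mimic Example \ref{mainexample} verbatim but in full generality. The recursive ``block'' construction there is precisely a device to lay out the integers $1, 2, \ldots, (k+1)(n-k)$ in an $(n-k) \times (k+1)$ rectangle $B_s$ (built out of $B_1, \ldots, B_{s-1}$) so that the integer in position $(i, k+2-j)$ equals, up to the fixed global shift, $b_j + (k+2-j) + (n-k) - a_i - (n-k+1-i)$. Thus I would: (1) verify by unwinding the base-$k_l$ expansion of $j$ and the base-$n_l$ expansion of $i$ in Definition \ref{notacioteorema} that the filling is a bijection onto $\{1, \ldots, d\}$ — this is the arithmetic heart, a mixed-radix counting argument, and it is where I expect the main bookkeeping obstacle to lie; (2) deduce that for each $t \in [1,d]$ there is a position $(i, k+2-j)$ carrying value $t$, hence the required equality $b_j - t + \cdots = a_i + \cdots$, hence a repeated entry in $(\beta - t, \gamma) + \rho$, hence $\Hl^\bullet(\cE(-t)) = 0$ by Theorem \ref{bott}(b); (3) check $\cE$ is initialized, exactly as in the example: $b_{k+1} = \alpha + 1 = a_1 + (n-k)$ forces $b_{k+1} = a_1$ after the normalization $a_{n-k}=0$, so $(\beta,\gamma)+\rho$ is strictly decreasing with $\sigma = \mathrm{id}$, giving $\Hl^0(\cE) \ne 0$ and $\Hl^0(\cE(-1)) = 0$. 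Then Remark \ref{pu} finishes this direction.

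For the converse, let $\cE = \Sigma^{(b_1,\ldots,b_{k+1})}\cQ \otimes \Sigma^{(a_1,\ldots,a_{n-k})}\cS^\vee$ be an arbitrary initialized $\GL(V)$-invariant Ulrich bundle, so by Remark \ref{ordenacio} the concatenation $(b_1 \ge \cdots \ge b_{k+1} = a_1 \ge \cdots \ge a_{n-k} = 0)$ is non-increasing, and by Proposition \ref{b1} we already know $b_1 = k(n-k-1)$. The strategy is: since $\Hl^\bullet(\cE(-t)) = 0$ for all $1 \le t \le d$, each such $t$ must equal some difference $\big(b_j + (k+2-j) + (n-k)\big) - \big(a_i + (n-k+1-i)\big)$ of an entry of the ``$\cQ$-block'' minus an entry of the ``$\cS^\vee$-block''; conversely for $t = 0$ and $t = d+1$ the vanishing must \emph{fail} at the appropriate slots, which (together with $b_1 = k(n-k-1)$ and strict monotonicity at $t=0$) pins down that the $k+1$ values $\{b_j + (k+2-j)+(n-k)\}$ and the $n-k$ values $\{a_i + (n-k+1-i)\}$ are arranged so that the multiset of pairwise differences $\{(\text{$\cQ$-value}) - (\text{$\cS^\vee$-value})\}$, which a priori has $(k+1)(n-k) = d$ elements, hits every integer in $[1,d]$ exactly once and misses $0$ and $d+1$. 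A pigeonhole/extremal argument then shows the $\cQ$-values, sorted, and the $\cS^\vee$-values, sorted, must each form an arithmetic-progression-of-arithmetic-progressions pattern: the gaps between consecutive $\cQ$-values and consecutive $\cS^\vee$-values are forced to be products of the $n_l$'s and $k_l$'s respectively, and reading off the successive ``levels'' of gaps recovers a pair of factorizations $k+1 = k_1 \cdots k_s$, $n-k = n_1 \cdots n_s$ with the stated constraints $n_l > 1$ ($l>1$), $k_l > 1$ ($l<s$); feeding these back into Definition \ref{notacioteorema} reproduces $(b_j)$ and $(a_i)$ up to the allowed overall shift. I expect this uniqueness/reconstruction step — showing that a length-$d$ difference-set tiling of $[1,d]$ by two ``rulers'' forces the nested-geometric structure — to be the genuinely delicate part; I would prove it by induction on $s$ (equivalently, on the number of prime-power ``levels''), peeling off the finest block $B_1$ first by looking at the smallest gap, exactly reversing the recursion in Example \ref{mainexample}.
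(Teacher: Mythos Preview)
Your proposal is correct and follows essentially the same approach as the paper's own proof: both directions rest on Borel--Bott--Weil plus the block filling of Example \ref{mainexample}, with the forward direction checking that the recursive block $B_s$ bijects onto $[1,(k+1)(n-k)]$ so that every twist $-t$ produces a repeated entry, and the converse reconstructing the factorizations $(k_l),(n_l)$ from the forced difference pattern. Your framing of the converse in terms of a ``difference-set tiling of $[1,d]$ by two rulers'' and an induction peeling off the finest block is exactly the paper's iterative argument (it uses the constant row/column increments $t_{i,j+1}-t_{i,j}=b_{j+1}-b_j-1$ and $t_{i+1,j}-t_{i,j}=a_i-a_{i+1}+1$ to read off $n_1$, then $k_1$, then $n_2$, etc.), just said in slightly different language.
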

\begin{proof}
First of all notice that $\cE$ is an initialized vector bundle. Indeed, since $b_{k+1}=a_1$, by Theorem \ref{bott} we have $\Hl^0(\cE(-1))=0$ and since \[ (b_1+n+1, \cdots,b_{k+1}+n-k+1,a_1+n-k, \dots, a_{n-k}+1)\] is a strictly decreasing  sequence we have
$\Hl^0(\cE) \neq 0$. Hence, according to Remark \ref{pu}, $\cE$ is an Ulrich
bundle if and only if for any integer $t \in [1,(k+1)(n-k)]$,
 \[ \Hl^i(\cE(-t))=0,  \quad 0 \leq i \leq (k+1)(n-k).\]

In order to prove all these vanishing, we will proceed as in Example \ref{mainexample}.  Given the pair $(n_1,k_1)$
we consider the $n_1 \times k_1$ block $B_1$
with $n_1$ columns and $k_1$ rows filled in by integers in the following way
\vspace{3mm}
$$
\centerline{
\begin{tabular}{|c| c | c| c | c| c |}
\hline
${\footnotesize(k_1-1)n_1+1} $ &  ${\footnotesize(k_1-1)n_1+2} $ & \ldots & \ldots & \ldots & $k_1n_1$   \\
\hline
\vdots & \vdots &  &  &  & \vdots \\
\hline  \vdots  & \vdots &  &  &   & \vdots \\
\hline $2n_1+1$  & $2n_1+2$ & \ldots & \ldots & \ldots  & $3n_1$\\
\hline $n_1+1$   & $n_1+2$ & \ldots & \ldots & \ldots  & $2n_1$ \\
\hline 1& 2 & \ldots & \ldots & \ldots  & $n_1$ \\
\hline
\end{tabular}}
$$
\[ \mbox{\footnotesize {Block of type $B_1$}} \]

For any integer $i$, $2 \leq i \leq s$, we consider the pair $(n_i,k_i)$ and we built the block $B_i$ with $k_i$ rows of $n_i$ blocks of
type $B_{i-1}$ on each row. Place first the $n_i$ blocks of the first row, then the $n_i$ blocks of the second row and
so on until the $k_i$-th row (see Example \ref{mainexample}).
\vspace{3mm}

 Now we observe that any integer  $t \in [1,(k+1)(n-k)]$ corresponds to a unique point of position $(i,k+2-j)$ for some
 $1 \leq i \leq n-k$, $1 \leq j \leq k+1$ inside the block $B_s$. Moreover, if $t$ is in the position $(i,k+2-j)$ then
\[b_j+n-j+2-t=a_i+(n-k)-(i-1). \]
That means that for any integer $t \in [1,(k+1)(n-k)]$ at least two entries of
\[(b_1-t,b_2-t,\cdots, b_{k+1}-t,a_1,a_2,\cdots, a_{n-k}) + (n+1,n,\cdots,2,1)\]
coincide. Hence, by Theorem \ref{bott},
for any integer $t \in  [1,(k+1)(n-k)]$,
\[ \Hl^i(\cE(-t))=0, \quad 0 \leq i \leq (k+1)(n-k).\]
Therefore, $\cE$ is an Ulrich bundle on $\Gr(k,n)$.

\vspace{3mm}

Let us prove the converse. Let $\cE=\Sigma^{(b_1, \cdots,b_{k+1})} \cQ \otimes \Sigma^{(a_1, \cdots,a_{n-k})}
\cS^{\vee}$ be an initialized $\GL(V)$-invariant Ulrich bundle on $\Gr(k,n)$. By Remark \ref{pu}, for any integer $t
\in [1,(k+1)(n-k)]$,
 \[ \Hl^i(\cE(-t))=0,  \quad 0 \leq i \leq (k+1)(n-k).\]
 Thus, by Theorem \ref{bott}, for any integer $t \in [1,(k+1)(n-k)]$, there exists a pair $(i,j)$ such that
 \begin{equation} \label{cond} b_j+n-j+2-t=a_i+(n-k)-(i-1). \end{equation}
 Consider the $(n-k)\times(k+1)$ block $B$ with $(n-k)$ columns and $(k+1)$ rows, put on the position $(i,k+2-j)$
 the value of $t$ such that (\ref{cond}) holds and denote by $t_{i,j}$ that value, that is:
 \begin{equation} \label{cond2} b_j+n-j+2-t_{i,j}=a_i+(n-k)-(i-1). \end{equation}
 Notice that for any integer $i$, $1 \leq i \leq n-k$, and any integer $j$, $1 \leq j \leq k+1$ we have:
 \begin{equation} \label{cond3}  t_{i,j+1}-t_{i,j}=b_{j+1}-b_j-1 \quad \mbox{and} \quad t_{i+1,j}-t_{i,j}=a_i-a_{i+1}+1 \end{equation}
and that there is a bijection between the integers  $t \in [1,(k+1)(n-k)]$ and the values inside
 the block $B$.

 It follows from (\ref{cond2}) that $t_{1,k+1}=1$. Denote by $n_1$, $1 \leq n_1 \leq n-k$, the integer such that $t_{i,k+1}=i$ for $1 \leq i \leq n_1$ and $t_{n_1+1,k+1} > n_1+1$. If $n_1=n-k$, take $s=1$, $k_1=k+1$ and the relations (\ref{cond2}) give us $\cE=\cE^{(k+1)}_{(n-k)}$. Assume $n_1 < n-k$. Then by (\ref{cond2}) and (\ref{cond3}) we must have $t_{1,k}=n_1+1$. By the same reason, the relations (\ref{cond2}) and (\ref{cond3}) force to have $t_{i,k}=n_1+i$ for $1 \leq i \leq n_1$. Denote by $k_1$ the integer such that
$t_{1,k+2-k_1}=(k_1-1)n_1+1$ and $t_{1,k+1-k_1} > k_1n_1+1$. Now, we must have $t_{n_1+1,k+1}=k_1n_1+1$ and since we still have the same relations $(\ref{cond2})$, we must fill in the same way the next $n_1 \times k_1$ box, that is the $t_{i,j}$ with $n_1+1 \leq i \leq 2n_1$ and $k_1 \leq j \leq k+2$. We will repeat this $n_2$ times, where $n_2$ is the integer such that $t_{1,k+1-k_1}=k_1n_1n_2+1$. Pushing forward the same argument and strongly using the fact that  we have a bijection between the integers $t \in [1, (k+1)(n-k) ]$ and the values of the $t_{i,j}$ inside $B$ we get the existence of positive integers $n_1, \cdots, n_s$ and $k_1, \cdots, k_s$ such that
 \[ k+1=k_1 \cdot k_2 \cdots k_s,\]
\[ n-k=n_1 \cdot n_2 \cdots n_s\]
$n_l >1$ for $l >1$, and $k_l > 1$ for $l < s$.  Moreover, according to the values that the $t_{i,j}$ have taken, the relations (\ref{cond2}) also give us
$ \cE=\cE^{(k_1, \cdots,k_s)}_{(n_1, \cdots,n_s)}$.
\end{proof}
\vspace{3mm}

\begin{remark} It should be mentioned that all the $\GL(V)$-invariant vector bundles $\cE=\Sigma^{(b_1,\cdots,
b_{k+1})}\cQ \otimes \Sigma^{(a_1, \cdots, a_s)} \cS^{\vee}$ are simple and hence indecomposable on $\Gr(k,n)$. So, all Ulrich bundles so far constructed are indecomposable. \end{remark}

\vspace{3mm}

\begin{example} (1) Let us consider the Grassmann variety $\Gr(1,21)$.  By Theorem \ref{main} (see also Corollary \ref{noUlrich} (a)) it supports 6 initialized $\GL(V)$-invariant Ulrich bundles. Indeed, using the notation
$(a^n,b^m):=(\overbrace{a,...a}^{n},\overbrace{b,..,b}^{m})$, they are:
\[ \begin{array}{l} \cE^{(2)}_{(20)} =\Sigma^{(19)}\cQ \\
\cE^{(2, 1)}_{(10, 2)}=\Sigma^{(19,10)}\cQ \otimes \Sigma^{(10^{10},0^{10})}\cS^{\vee}, \\
\cE^{(2, 1)}_{(5 , 4)}=\Sigma^{(19,15)}\cQ \otimes \Sigma^{(15^5,10^5,5^5,0^5)} \cS^{\vee}, \\
\cE^{(2 , 1)}_{(4 , 5)}=\Sigma^{(19,16)}\cQ \otimes \Sigma^{(16^4,12^4,8^4,4^4,0^4)}\cS^{\vee} , \\
\cE^{(2 , 1)}_{(2, 10)}=\Sigma^{(19,18)}\cQ \otimes
\Sigma^{(18^2,16^2,14^2,12^2,10^2,8^2,6^2,4^2,2^2,0^2)}\cS^{\vee} \quad \mbox{and} \\
\cE^{(2 , 1 )}_{(1 , 20)}=\Sigma^{(19,19)}\cQ \otimes \Sigma^{(19, 18,
17,16,15,14,13,12,11,10,9,8,7,6,5,4,3,2,1,0)}\cS^{\vee} . \end{array} \]

(2) On $\Gr(11,29)$, the sequences $(2,2,3)$ and $(2,3,3)$ with $12=2 \cdot 2 \cdot 3$ and $18=2 \cdot 3 \cdot 3 $ give
the Ulrich bundle
\[ \Sigma^{(b_1, \cdots,b_{12})} \cQ \otimes \Sigma^{(a_1, \cdots,a_{18})} \cS^{\vee} \]
with
\[{\footnotesize (b_1, \cdots,b_{12})=(187,186,177,176,119,118,109,108,51,50,41,40)} \]
and
\[{\footnotesize (a_1, \cdots,a_{18})=(40,40,38,38,36,36,22,22,20,20,18,18,4,4,2,2,0,0)} \]
meanwhile the sequences $(2,3,2)$ and $(2,3,3)$ with $12=2 \cdot 3 \cdot 2$ and $18=2 \cdot 3 \cdot 3 $ give the Ulrich
bundle
\[ \Sigma^{(b_1, \cdots,b_{12})} \cQ \otimes \Sigma^{(a_1, \cdots,a_{18})} \cS^{\vee} \]
with
\[{\footnotesize (b_1, \cdots,b_{12})=(187,186,177,176,167,166,85,84,75,74,65,64)} \]
and
\[{\footnotesize (a_1, \cdots,a_{18})=(64,64,62,62,60,60,34,34,32,32,30,30,4,4,2,2,0,0)}. \]
This example let us to stress that, in fact, the same integers placed in different order inside the sequence give
different Ulrich bundles.
\end{example}

In some particular cases, we can exactly give the number of $\GL(V)$-invariant Ulrich bundles we have. Namely,

\vspace{3mm}
\begin{corollary}\label{noUlrich}
Let $\Gr(k,n) \hookrightarrow \PP(\bigwedge^{k+1} V)$ be the Grassmannian embedded by the Pl\"{u}cker embedding.
\begin{itemize}
\item[(a)] If $k+1$ is a prime number, then there are exactly $ \phi(n-k)$ initialized $\GL(V)$-invariant Ulrich
    bundles being $\phi(n-k)$ the number of divisors of $n-k$.
     \item[(b)]If $n-k$ is a prime number, then there are exactly $ \phi(k+1)$ initialized $\GL(V)$-invariant
         Ulrich bundles being $\phi(k+1)$ the number of divisors of $k+1$.
     \end{itemize}
\end{corollary}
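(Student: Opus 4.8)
The plan is to derive Corollary \ref{noUlrich} directly from the classification Theorem \ref{main}: the initialized $\GL(V)$-invariant Ulrich bundles on $\Gr(k,n)$ are in bijection with pairs of ordered sequences $(k_1,\dots,k_s)$, $(n_1,\dots,n_s)$ of positive integers of a common length $s$ satisfying $k+1 = k_1\cdots k_s$, $n-k = n_1\cdots n_s$, together with the constraints $n_l>1$ for $l>1$ and $k_l>1$ for $l<s$. So what we really need is to \emph{count} these pairs under the stated primality hypotheses; the analytic content is entirely finished by Theorem \ref{main}, and only an elementary combinatorial argument remains.

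First I would treat part (a). Suppose $k+1=p$ is prime. Then $k+1 = k_1\cdots k_s$ with each $k_l$ a positive integer, $k_l>1$ for $l<s$: since the product is a prime, exactly one factor can exceed $1$, and because all factors with index $<s$ are forced to be $>1$ we must have $s\le 2$. If $s=1$ then $k_1=p$; if $s=2$ then $k_1=p$ (the one allowed $>1$ among indices $<s$) forces $k_2=1$, which is consistent. In either case the sequence $(n_1,\dots,n_s)$ with $n-k=n_1\cdots n_s$ and $n_l>1$ for $l>1$ is essentially unconstrained in its \emph{set} of entries: an ordered factorization of $n-k$ of length $s$ with no interior/trailing restriction for $n$ beyond $n_l>1$ for $l>1$. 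I would show that taking $s=2$ with $n_1 = d$ a divisor of $n-k$ and $n_2 = (n-k)/d$ — but this needs $n_2>1$, i.e. $d\ne n-k$ — together with the single $s=1$ case $n_1 = n-k$ (and $d$ ranging over \emph{all} divisors) gives exactly one Ulrich bundle for each divisor $d$ of $n-k$, hence $\phi(n-k)$ in total; here one checks that distinct divisors give non-isomorphic bundles, which follows because the formulas in Definition \ref{notacioteorema} for the $a_i, b_j$ genuinely depend on the chosen factorization (as the last Example in the excerpt emphasizes, even reordering the same integers changes the bundle).

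Part (b) is symmetric: if $n-k = q$ is prime, the condition $n_l>1$ for $l>1$ forces $s\le 2$ with $n_1 = q$ and (when $s=2$) $n_2 = 1$, while $(k_1,\dots,k_s)$ with $k+1 = k_1\cdots k_s$, $k_l>1$ for $l<s$, runs over ordered factorizations of $k+1$ with only the final factor allowed to equal $1$ — which, after accounting for the $s=1$ case, are again in bijection with the divisors of $k+1$ via the first factor $k_1$. So the count is $\phi(k+1)$.

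The main obstacle — and it is a mild one — is bookkeeping the boundary cases so as to avoid double counting: one must check that the length-$1$ factorization and the length-$2$ factorizations with a trailing $1$ are not being counted twice (they are genuinely different $s$, hence different bundles, as long as $n-k>1$, resp. $k+1>1$), and that when $n-k$ (resp. $k+1$) equals $1$ or a prime the edge cases still give exactly the divisor count. I would also spell out why two distinct admissible pairs of sequences yield non-isomorphic bundles, so that ``number of bundles'' equals ``number of admissible pairs''; this is immediate from the explicit dependence of $(a_i)$ and $(b_j)$ on the sequences in Definition \ref{notacioteorema}, together with the fact that an irreducible $\GL(V)$-invariant bundle determines its highest weight, and hence the data $(\beta,\gamma)$, uniquely.
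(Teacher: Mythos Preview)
Your approach is exactly the paper's: invoke Theorem \ref{main} and count admissible pairs of sequences when one of $k+1$, $n-k$ is prime. Two small corrections in part (b). First, the constraint is $n_l>1$ for $l>1$ (not $l<s$), so when $n-k=q$ is prime and $s=2$ you must have $n_2=q$ and $n_1=1$, not the other way around; the asymmetry between the $k$- and $n$-constraints matters here, though the final count is unaffected. Second, your ``bijection with divisors via the first factor $k_1$'' is not literally correct, since $k_1=k+1$ occurs both for $s=1$ and for $s=2$ (with $k_2=1$); the clean count is that $s=2$ with $k_1>1$ gives $\phi(k+1)-1$ pairs and $s=1$ gives one more, matching the paper's argument for (a) verbatim with the roles swapped. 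Finally, the injectivity you worry about is already contained in the converse half of Theorem \ref{main}, whose proof reconstructs the sequences from the bundle; you need not argue it separately.
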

 \begin{proof} $(a)$ It follows from the fact that if $k+1$ is a prime number, we only have two ordered sequences $(k_1, \dots, k_s)$ such that $k+1=k_1 \cdots k_s$, namely  $(k_1)=(k+1)$ of length 1  and $(k_1,k_2)=(k+1,1)$ of length 2. Hence the only sequences we can consider
 verifying $n-k=n_1 \cdots n_s $ are $(n_1)=(n-k)$ of length 1 and $(n_1,n_2)=(a_i,b_i)$ of length 2 for any divisor $a_i$, $1 \leq a_i
 <n-k$ of
 $n-k$.  Hence, by Theorem \ref{main}, there are exactly $ \phi(n-k)$ initialized $\GL(V)$-invariant Ulrich bundles
 being $\phi(n-k)$ the number of divisors of $n-k$. The proof of $(b)$ is completely analogous.
 \end{proof}

\vspace{3mm}

 As a consequence of our main result Theorem \ref{main}, we are able to solve Problem \ref{problem} (b) for $\GL(V)$-invariant Ulrich bundles on $\Gr(k,n)$.

 \begin{corollary}\label{minimrankUlrich}
Let $\Gr(k,n) \hookrightarrow \PP(\bigwedge^{k+1} V)$ be the Grassmannian embedded by the Pl\"{u}cker embedding.
Then, the smallest possible rank for a $\GL(V)$-invariant Ulrich bundle on $\Gr(k,n)$ is
$$\frac{\prod _{1\le i < j \le k+1}(j-i)(n-k)}{k!(k-1)!\cdots 2!}.$$
\end{corollary}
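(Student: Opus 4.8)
The plan is to compute the rank of each $\GL(V)$-invariant Ulrich bundle $\cE^{(k_1,\dots,k_s)}_{(n_1,\dots,n_s)}$ produced by Theorem \ref{main}, using Lemma \ref{dim}, and then minimize over all admissible factorizations. By Theorem \ref{main} every initialized $\GL(V)$-invariant Ulrich bundle on $\Gr(k,n)$ has the form $\cE=\Sigma^{\beta}\cQ\otimes\Sigma^{\gamma}\cS^{\vee}$ with $\beta,\gamma$ as in Definition \ref{notacioteorema}, so $\rk(\cE)=\rk(\Sigma^{\beta}\cQ)\cdot\rk(\Sigma^{\gamma}\cS^{\vee})$, and each factor is computed by the hook-content/Weyl dimension formula of Lemma \ref{dim}. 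The main point is that the entries of $\beta$ come in $k_1$ consecutive runs separated by gaps (reflecting the $B_1$-block structure), and likewise for $\gamma$ with runs of length $n_1$, so the partitions $\beta$ and $\gamma$ are, up to an overall shift, built in a self-similar way out of the factorization data. I expect the ranks to factor as products over the "levels" $1,\dots,s$ of the construction.

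First I would treat the base case $s=1$: there $(k_1)=(k+1)$, $(n_1)=(n-k)$, $\cE^{(k+1)}_{(n-k)}=\Sigma^{\beta}\cQ\otimes\Sigma^{\gamma}\cS^{\vee}$ where $\beta=(b_1,\dots,b_{k+1})$ has consecutive differences $b_j-b_{j+1}=n_1-1+1=n-k$ wait — reading off Definition \ref{notacioteorema} with $s=1$: $\alpha=n_1=n-k$, $\rho_j=j$, so $b_{k+2-j}=(n-k)+(j-1)(n-k)+1=j(n-k)+1$; thus $\beta$ is an arithmetic progression with common difference $n-k$, and $\gamma=(a_1,\dots,a_{n-k})$ with $a_{n-k+1-i}=i+1-i=1$ — wait, $a_i=\rho_i+1-i$ and $\rho_i=i$ gives $a_i=1$ for all $i$, i.e.\ $\gamma$ is constant. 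For such $\beta,\gamma$ Lemma \ref{dim} gives closed forms: $\rk(\Sigma^{\gamma}\cS^{\vee})=1$ since a constant partition is a multiple of $\odi{G}(\text{something})$, and $\rk(\Sigma^{\beta}\cQ)$ is a product of the "staircase with step $n-k$" determinant, which evaluates to $\prod_{1\le i<j\le k+1}\frac{(j-i)(n-k)+(\text{shift terms})}{j-i}$. Carrying this out carefully should already reproduce the claimed minimum $\frac{\prod_{1\le i<j\le k+1}(j-i)(n-k)}{k!(k-1)!\cdots 2!}$, so the content of the corollary is that $s=1$ is optimal.

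Next I would show that any $s\ge 2$ factorization gives strictly larger rank. The cleanest route is a multiplicativity/monotonicity argument: refining a factorization (splitting some $k_l$ or $n_l$ further) multiplies the rank by an integer factor $>1$ coming from the extra self-similar level, so the coarsest factorization $s=1$ is the unique minimizer. To make this precise I would set up a recursion: if $\cE^{(k_1,\dots,k_s)}_{(n_1,\dots,n_s)}$ corresponds to building block $B_s$ out of $n_s\times k_s$ copies of $B_{s-1}$, then the Young diagrams $\beta,\gamma$ at level $s$ decompose into translated copies of the level-$(s-1)$ diagrams plus an arithmetic "coarse" part, and Lemma \ref{dim} applied to this block structure factors $\rk$ as $\big(\rk\text{ of the }(s-1)\text{-bundle}\big)\times\big(\text{rank of the analogous bundle for }\Gr\text{ built from }(k_s),(n_s)\big)^{\text{mult.}}$, by the standard fact that for a partition obtained by gluing translated blocks the hook-content product splits. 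Granting that, an induction on $s$ reduces everything to the $s=1$ computation and the inequality $\text{(level factor)}\ge 1$ with equality only in degenerate cases.

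The hard part will be the bookkeeping in this block-factorization of the dimension formula: verifying that $d_{ij}=\lambda_i+(\lambda^t)_j-(i+j)+1$ for a glued-block partition really does factor level by level, and tracking the shift constants (the $\alpha$, the $(\rho_j-1)n_1$, etc.) through Lemma \ref{dim} without sign or off-by-one errors. Once the $s=1$ value is pinned down and the refinement inequality is in hand, the minimization is immediate; so in the write-up I would present the explicit $s=1$ rank first, identify it with the formula in the statement, and then dispatch $s\ge 2$ by the monotonicity-under-refinement lemma.
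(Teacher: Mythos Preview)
Your approach is essentially the same as the paper's: invoke Theorem \ref{main} to reduce to the family $\cE^{(k_1,\dots,k_s)}_{(n_1,\dots,n_s)}$, then use Lemma \ref{dim} to identify $s=1$ as the rank-minimizer and compute that rank explicitly. The paper's own proof is considerably terser---it simply asserts that by Lemma \ref{dim} the minimum occurs for $(k_1)=(k+1)$, $(n_1)=(n-k)$ and states the resulting rank---so your plan to justify the minimization via a refinement-monotonicity argument goes beyond what the paper actually writes out, but the overall skeleton is the same.
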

 \begin{proof} By Theorem \ref{main}, we must look for the  pair of ordered sequences $(k_1, \cdots,k_s)$ and $(n_1,
 \cdots,n_s)$ such that the corresponding Ulrich bundle $\cE^{(k_1, \cdots,k_s)}_{(n_1, \cdots,n_s)}$
 has the smallest possible rank. By Lemma \ref{dim}, this is the case for the sequences $(k_1)=(k+1)$, $(n_1)=(n-k)$  and the rank
 of $\cE^{(k+1)}_{(n-k)}$ is exactly
 $$\frac{\prod _{1\le i < j \le k+1}(j-i)(n-k)}{k!(k-1)!\cdots 2!}.$$
 \end{proof}

\vspace{3mm}

We end the paper with another interesting consequence of our main result:

\begin{corollary}
Any initialized Ulrich bundle on $\Gr(k,n)$ has slope  $$ \mu=\frac{k(n-k-1)}{2} \cdot d $$
being $d:=\de(\Gr(k,n))= \frac{((k+1)(n-k))! k!(k-1)! \cdots 2!}{n!(n-1)! \cdots (n-k)!}$.
\end{corollary}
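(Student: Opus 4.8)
The plan is to combine the last assertion of Proposition \ref{ulrichslope} with an explicit computation on one bundle from the family of Theorem \ref{main}. By that assertion, all initialized Ulrich bundles on $\Gr(k,n)$ have the same slope, so it is enough to compute $\mu(\cE)$ for a single initialized Ulrich bundle $\cE$. I would use the simplest one, namely $\cE:=\cE^{(k+1)}_{(n-k)}$ associated with the length-one sequences $(k_1)=(k+1)$ and $(n_1)=(n-k)$, which is an Ulrich bundle by Theorem \ref{main}.

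The next step is to read off $\cE$ explicitly. Specializing Definition \ref{notacioteorema} to $s=1$ (equivalently, tracing the argument in the proof of Theorem \ref{main}: the block $B_1$ is the $(k+1)\times(n-k)$ rectangle filled with $1,2,\dots,(k+1)(n-k)$ row by row starting from the bottom, $\alpha=n_1=n-k$, and \eqref{cond2} together with the normalization $a_{n-k}=0$ forces $t_{i,j}=(k+1-j)(n-k)+i$) one obtains $a_i=0$ for all $i$ and $b_j=(k+1-j)(n-k-1)$ for $1\le j\le k+1$; in particular $b_1=k(n-k-1)$, as it must be by Proposition \ref{b1}. Hence $\cE\cong\Sigma^{\beta}\cQ$ with $\beta=\bigl(k(n-k-1),(k-1)(n-k-1),\dots,(n-k-1),0\bigr)$ and no $\cS^{\vee}$-factor, i.e. $\gamma=0$.

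Finally I would apply Lemma \ref{slope}. With $\sum_{j=1}^{k+1}\beta_j=(n-k-1)\bigl(k+(k-1)+\dots+1+0\bigr)=(n-k-1)\,\frac{k(k+1)}{2}$ and $\gamma=0$, the lemma gives
\[
\mu(\cE)=\frac{(n-k-1)\,\frac{k(k+1)}{2}}{k+1}\cdot d=\frac{k(n-k-1)}{2}\cdot d ,
\]
and, by the first paragraph, every initialized Ulrich bundle on $\Gr(k,n)$ shares this slope, which proves the corollary. I do not expect any real obstacle here: the only slightly delicate point is correctly extracting the partition $\beta$ from Definition \ref{notacioteorema} in the case $s=1$, and even that could be bypassed by computing $\mu(\cE)$ directly from the multiplicativity of Chern classes (using $\mu(\cQ)=d/(k+1)$, exactly as in the proof of Lemma \ref{slope}) once the shape of $\beta$ has been identified.
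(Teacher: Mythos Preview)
Your proof is correct and follows essentially the same route as the paper: both invoke Proposition \ref{ulrichslope} to reduce to a single bundle, choose the length-one case $\cE^{(k+1)}_{(n-k)}=\Sigma^{\beta}\cQ$ with $\beta_j=(k+1-j)(n-k-1)$, and then apply Lemma \ref{slope} to get $\mu=\frac{k(n-k-1)}{2}\cdot d$. Your only extra care---extracting $\beta$ via the block picture and \eqref{cond2} rather than trusting the closed formula in Definition \ref{notacioteorema}---is well placed, since that is indeed the one spot where a sign or shift error could creep in.
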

\begin{proof} According to Proposition \ref{ulrichslope} (a), all Ulrich bundle on $\Gr(k,n)$ has the same slope.
By Theorem \ref{main},  $\cE^{(k+1)}_{(n-k)}=\Sigma^{(b_1,\cdots,b_{k})}Q$
being $b_i=(k-i+1)(n-k-1)$ for $1 \leq i \leq k$ is an Ulrich bundle. Hence, it is enough to compute the slope of
$\cE^{(k+1)}_{(n-k)}$. By Lemma \ref{slope}, the slope of $\cE^{(k+1)}_{(n-k)}$ turns out to be
\[ \mu(\cE^{(k+1)}_{(n-k)})=\mu(\Sigma^{(b_1,\cdots,b_{k})}Q)= \frac{\sum_{i=1}^{k+1}(k-i+1)(n-k-1)}{k+1}\cdot d= \frac{k(n-k-1)}{2} \cdot d.\]
\end{proof}

Notice that,  in spite of the fact that the slope of an Ulrich bundle on  $\Gr(k,n)$
is determined, the uniqueness  of initialized Ulrich bundles is false as it has been shown in Theorem \ref{main}.

\end{document}